\providecommand \@dotsep{5} \def\listtodoname{List of Todos} \def\listoftodos{\@starttoc{tdo}\listtodoname} 
\patchcmd{\@startsection}{\@afterindenttrue}{\@afterindentfalse}{}{}             
\patchcmd{\part}{\bfseries}{\bfseries\LARGE}{}{}
\patchcmd{\section}{\scshape}{\bfseries}{}{}\renewcommand{\@secnumfont}{\bfseries} 
\patchcmd{\@settitle}{\uppercasenonmath\@title}{\large}{}{}
\patchcmd{\@setauthors}{\MakeUppercase}{}{}{}
\theoremstyle{plain}
\newtheorem{thm}{Theorem}[section]
\newtheorem{lemma}[thm]{Lemma}
\newtheorem{thmA}{Theorem}  
\newtheorem*{thm*}{Theorem}
\theoremstyle{definition}
\newtheorem{rem}[thm]{Remark}
\newtheorem*{rem*}{Remark}
\newtheorem{ex}[thm]{Example}
\theoremstyle{remark}
\DeclareRobustCommand{\gobblefour}[5]{}    
\DeclareFontFamily{OT1}{pzc}{}                                
\DeclareFontShape{OT1}{pzc}{m}{it}{<-> s * [1.10] pzcmi7t}{}
\DeclareMathAlphabet{\mathpzc}{OT1}{pzc}{m}{it}
\DeclareSymbolFont{sfoperators}{OT1}{bch}{m}{n} \DeclareSymbolFontAlphabet{\mathsf}{sfoperators} \makeatletter\def\operator@font{\mathgroup\symsfoperators}\makeatother 
\DeclareSymbolFont{cmletters}{OML}{cmm}{m}{it}              
\DeclareSymbolFont{cmsymbols}{OMS}{cmsy}{m}{n}
\DeclareSymbolFont{cmlargesymbols}{OMX}{cmex}{m}{n}
\DeclareMathSymbol{\myjmath}{\mathord}{cmletters}{"7C}     \let\jmath\myjmath 
\DeclareMathSymbol{\myamalg}{\mathbin}{cmsymbols}{"71}     
\DeclareMathSymbol{\mycoprod}{\mathop}{cmlargesymbols}{"60}
\DeclareMathSymbol{\myalpha}{\mathord}{cmletters}{"0B}     \let\alpha\myalpha 
\DeclareMathSymbol{\mybeta}{\mathord}{cmletters}{"0C}      \let\beta\mybeta
\DeclareMathSymbol{\mygamma}{\mathord}{cmletters}{"0D}     \let\gamma\mygamma
\DeclareMathSymbol{\mydelta}{\mathord}{cmletters}{"0E}     \let\delta\mydelta
\DeclareMathSymbol{\myepsilon}{\mathord}{cmletters}{"0F}   \let\epsilon\myepsilon
\DeclareMathSymbol{\myzeta}{\mathord}{cmletters}{"10}      \let\zeta\myzeta
\DeclareMathSymbol{\myeta}{\mathord}{cmletters}{"11}       \let\eta\myeta
\DeclareMathSymbol{\mytheta}{\mathord}{cmletters}{"12}     \let\theta\mytheta
\DeclareMathSymbol{\myiota}{\mathord}{cmletters}{"13}      \let\iota\myiota
\DeclareMathSymbol{\mykappa}{\mathord}{cmletters}{"14}     \let\kappa\mykappa
\DeclareMathSymbol{\mylambda}{\mathord}{cmletters}{"15}    \let\lambda\mylambda
\DeclareMathSymbol{\mymu}{\mathord}{cmletters}{"16}        \let\mu\mymu
\DeclareMathSymbol{\mynu}{\mathord}{cmletters}{"17}        \let\nu\mynu
\DeclareMathSymbol{\myxi}{\mathord}{cmletters}{"18}        \let\xi\myxi
\DeclareMathSymbol{\mypi}{\mathord}{cmletters}{"19}        \let\pi\mypi
\DeclareMathSymbol{\myrho}{\mathord}{cmletters}{"1A}       \let\rho\myrho
\DeclareMathSymbol{\mysigma}{\mathord}{cmletters}{"1B}     \let\sigma\mysigma
\DeclareMathSymbol{\mytau}{\mathord}{cmletters}{"1C}       \let\tau\mytau
\DeclareMathSymbol{\myupsilon}{\mathord}{cmletters}{"1D}   \let\upsilon\myupsilon
\DeclareMathSymbol{\myphi}{\mathord}{cmletters}{"1E}       \let\phi\myphi
\DeclareMathSymbol{\mychi}{\mathord}{cmletters}{"1F}       \let\chi\mychi
\DeclareMathSymbol{\mypsi}{\mathord}{cmletters}{"20}       \let\psi\mypsi
\DeclareMathSymbol{\myomega}{\mathord}{cmletters}{"21}     \let\omega\myomega
\DeclareMathSymbol{\myvarepsilon}{\mathord}{cmletters}{"22}\let\varepsilon\myvarepsilon
\DeclareMathSymbol{\myvartheta}{\mathord}{cmletters}{"23}  \let\vartheta\myvartheta
\DeclareMathSymbol{\myvarpi}{\mathord}{cmletters}{"24}     \let\varpi\myvarpi
\DeclareMathSymbol{\myvarrho}{\mathord}{cmletters}{"25}    \let\varrho\myvarrho
\DeclareMathSymbol{\myvarsigma}{\mathord}{cmletters}{"26}  \let\varsigma\myvarsigma
\DeclareMathSymbol{\myvarphi}{\mathord}{cmletters}{"27}    \let\varphi\myvarphi
\DeclareMathOperator{\Poly}{Poly}
\DeclareMathOperator{\hypersum}{\,\raisebox{-2.2pt}{\larger[2]{$\boxplus$}}\,}
\DeclareMathOperator{\hyperprod}{\raisebox{-1.4pt}{\scalebox{1.5}{$\boxdot$}}\,}
\newcommand\C{{\mathbb C}}
\newcommand\N{{\mathbb N}}
\newcommand\R{{\mathbb R}}
\renewcommand\S{{\mathbb S}}
\newcommand\T{{\mathbb T}}
\newcommand\cP{{\mathcal P}}
\newcommand\sign{\textup{sign}}
\renewcommand\geq{\geqslant}
\renewcommand\leq{\leqslant}
\newcommand{\hyperplus}{\mathrel{\,\raisebox{-1.1pt}{\larger[-0]{$\boxplus$}}\,}}
\newcommand{\hyperdot}{\,\raisebox{-0.3pt}{\larger[+1]{$\boxdot$}}\,}
\newcommand{\arincl}[1]{\ar@{ >->}@<-0,0ex>#1} 
\title{Factorizations of tropical and sign polynomials}
\author{Alexander Agudelo}
\address{\rm Alexander Agudelo, Instituto Nacional de Matem\'atica Pura e Aplicada, Rio de Janeiro, Brazil}
\email{alagudel@gmail.com}
\author{Oliver Lorscheid}
\address{\rm Oliver Lorscheid, Instituto Nacional de Matem\'atica Pura e Aplicada, Rio de Janeiro, Brazil}
\email{oliver@impa.br}
\thanks{The authors thank the Max Planck Institute for Mathematics in Bonn that has hosted them during the time of writing this manuscript.}
\begin{document}

\begin{abstract} 
 In this text, we study factorizations of polynomials over the tropical hyperfield and the sign hyperfield, which we call \emph{tropical polynomials} and \emph{sign polynomials}, respectively. We classify all irreducible polynomials in either case. We show that tropical polynomials factor uniquely into irreducible factors, but that unique factorization fails for sign polyomials. We describe division algorithms for tropical and sign polynomials by linear terms that correspond to roots of the polynomials.
\end{abstract}

\maketitle


\section*{Introduction}
\label{introduction}

Hyperfields were introduced by Marc Krasner in 1956 in his paper \cite{Krasner56} as a generalization of fields by allowing the addition to be multi-valued. Since then a considerable amount of literature on hyperfields has built up, but, still, the notion of a hyperfield stayed largely in the shadow of mainstream mathematics until around a decade ago when the works \cite{Viro11} of Viro and \cite{Connes-Consani11} of Connes and Consani showed the potential of hyperfields for tropical and arithmetic geometry. More recently, Baker and Bowler have demonstrated in \cite{Baker-Bowler17} the relevance of hyperfields for matroid theory. 

The joint paper \cite{Baker-Lorscheid18b} of Baker and the second author of this text provides additional evidence for the usefulness of this concept: a study of roots of polynomials, and their multiplicities, over hyperfields leads to a simultaneous proof of Newton's polygon rule and Descartes' rule of signs. The hyperfields that underlie these rules are the tropical hyperfield $\T$ and the sign hyperfield $\S$, respectively. 

In this paper, we complement the theory from \cite{Baker-Lorscheid18b} by some results on the factorization of polynomials over $\T$ and $\S$, which we call \emph{tropical polynomials} and \emph{sign polynomials}, respectively. Before we turn to a description of our findings, we introduce the two main actors of our text.

\subsection*{The tropical and the sign hyperfield}

The tropical hyperfield $\T$ is the set $\R_{\geq0}$ of non-negative real numbers together with the usual multiplication and with the hyperaddition given by
\[
 a_1 \hyperplus \dotsb \hyperplus a_n \ = \ \begin{cases} \big\{\max\{a_i\}\big\}  &\text{if $a_j<a_i$ for all $i\neq j$}; \\ [0,a_i] &\text{if $a_k\leq a_i=a_j$ for some $i\neq j$ and all $k$.} \end{cases}
\]

The \emph{sign hyperfield $\S$} is the set $\{0,1,-1\}$ together with the obvious multiplication and with the hyperaddition given by 
\[
 a_1 \hyperplus \dotsb \hyperplus a_n \ = \ \begin{cases} \{a_i\} &\text{if $a_k\in\{0,a_i\}$ for all $k$}; \\ \S &\text{if both $1$ and $-1$ are in $\{a_1,\dotsc,a_n\}$}. \end{cases}
\]

\subsection*{Factorization of polynomials}

Let $F$ be a hyperfield---the reader might want to think of $F$ as one of $\T$ or $\S$. A polynomial (of degree $n$) over $F$ is an expression $p=c_nT^n+\dotsc+c_1T+c_0$ with $c_i\in F$ and $c_n\neq 0$ unless $n=0$. Given two polynomials $p=\sum c_iT^i$ and $q=\sum d_iT^i$ over $F$, we define their hyperproduct as the set
\[\textstyle
 p \hyperdot q \ = \ \big\{ \, \sum e_iT^i \, \big| \, e_i\in\underset{k+l=i}\hypersum c_kd_l \, \big\}
\]
of polynomials over $F$. We define recursively the hyperproduct of $n$ polynomials $q_1,\dotsc,q_n$ over $F$ as
\[
 \underset{i=1}{\stackrel{n}\hyperprod} q_i \ = \ \bigcup_{p\in\hyperdot_{i=1}^{n-1} q_i} p \hyperdot q_n.
\]
A polynomial $p$ is \emph{irreducible} if its degree is positive and if for all polynomials $q_1$ and $q_2$ such that $p\in q_1\hyperdot q_2$ either $q_1$ or $q_2$ is of degree $0$. A \emph{quotient of $p$ by $q$} is a polynomial $q'$ such that $p\in q\hyperdot q'$. 

A polynomial $p$ has a \emph{unique factorization into irreducibles} if there irreducible polynomials $q_1,\dotsc,q_n$ that are unique up to a permutation and up to multiplication by constant polynomial such that $p\in c\hyperdot \hyperprod_{i=1}^n q_i$ for $c\in F$.

\subsection*{Unique factorization for tropical polynomials and its failure for sign polynomials} 

Our first result is a classification of all irreducible tropical polynomials and the unique factorization over $\T$. The following is Theorem \ref{thm: unique factorization of tropical polynomials}.

\begin{thmA}\label{thmA}
 The irreducible tropical polynomials are precisely the linear tropical polynomials, and every tropical polynomial has a unique factorization into irreducibles.
\end{thmA}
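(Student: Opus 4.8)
The plan is to split the statement into three parts: \textbf{(a)} every linear tropical polynomial is irreducible; \textbf{(b)} every tropical polynomial $p$ of degree $n$ lies in $c\hyperdot q_1\hyperdot\dotsb\hyperdot q_n$ for some $c\in\T$ and linear $q_1,\dotsc,q_n$; and \textbf{(c)} the multiset $\{q_1,\dotsc,q_n\}$ produced in (b) is determined by $p$ up to rescaling the factors. Parts (a) and (b) together already pin down the irreducibles: by (b), a polynomial of degree $\ge 2$ lies in $q\hyperdot q_n$ with both factors of positive degree, hence is reducible, while (a) says no linear polynomial factors nontrivially. The starting observation for (a) is that $\hyperdot$ is additive on degrees: if $q_1$ has leading coefficient $a\neq0$ in degree $d_1$ and $q_2$ has leading coefficient $b\neq0$ in degree $d_2$, then every element of $q_1\hyperdot q_2$ has its $T^{d_1+d_2}$-coefficient equal to the ordinary product $ab\neq0$ and vanishing coefficients in higher degrees, so all of $q_1\hyperdot q_2$ has degree exactly $d_1+d_2$. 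Hence if a degree-one polynomial lies in $q_1\hyperdot q_2$, one factor must be a constant, which is (a).

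For (b) the input is the root theory of \cite{Baker-Lorscheid18b}. A tropical polynomial $p=\sum c_iT^i$ of positive degree has a root in $\T$: if $c_0=0$ then $0$ is a root, and otherwise the piecewise-linear function $t\mapsto\max_i c_it^i$ is realized by the constant term near $t=0$ and by the top term near $t=\infty$, so two of its terms tie for the maximum at some $t>0$, and such a $t$ is a root of $p$. Given a root $r$ of $p$, I would solve for the coefficients of a quotient $p'$ with $p\in(c_nT+c_nr)\hyperdot p'$ and $\deg p'=\deg p-1$ — the solvability of this system, descending from the top coefficient and using the interval-valued hypersums, is precisely what having $r$ as a root buys. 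Iterating the division and collecting the resulting scalar yields $p\in c\hyperdot q_1\hyperdot\dotsb\hyperdot q_n$ with all $q_i$ linear. (Equivalently one can describe $\hyperprod_{i=1}^n(T+r_i)$ explicitly: for $r_1\ge\dotsb\ge r_n$ it should be the set of monic degree-$n$ polynomials whose $T^{n-j}$-coefficient is at most the tropical elementary symmetric value $r_1\dotsm r_j$, with equality whenever that value is a breakpoint of the relevant envelope; then $p$ itself is a member for the $r_i$ read off from $t\mapsto\max_ic_it^i$.)

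For (c), degree additivity forces two factorizations of $p$ into linears to have the same length $n$; after normalizing each factor to $T+r_i$, it suffices to show that the multiset $\{r_1,\dotsc,r_n\}$ is an invariant of $p$, since the scalars and the constant $c$ are then forced by the leading coefficient. I would prove, by induction on $n$ using the recursive definition of $\hyperprod$, that for every $p\in c\hyperprod_{i=1}^n(T+r_i)$ with $r_1\ge\dotsb\ge r_n$ the Newton polygon of $p$ (the breakpoint data of $t\mapsto\max_i c_it^i$) equals the polygon $P$ with value $c\,r_1\dotsm r_j$ in position $n-j$. In the inductive step one writes $p\in p''\hyperdot(T+r_n)$ with $\deg p''=n-1$ and Newton polygon the analogous $P''$ by hypothesis; each coefficient of $p$ then lies in a hypersum of two terms, both bounded by the value of $P$ (using the inductive bound together with $r_n\le r_j$), so no coefficient of $p$ exceeds $P$; and at each vertex of $P$ these two terms are strictly comparable, so the hypersum is a singleton and the coefficient of $p$ is forced to equal the vertex value. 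Hence the upper envelope of the coefficients of $p$ is $P$, and since that envelope is intrinsic to $p$, the polygon $P$ — and with it the multiset $\{r_i\}$ — is determined by $p$.

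The main obstacle is controlling $\hyperprod_{i=1}^n(T+r_i)$ when several $r_i$ coincide: then its members have intermediate coefficients ranging over whole intervals $[0,\,\cdot\,]$, and one must check both that this never lifts a coefficient above $P$ (needed for uniqueness) and that it neither enlarges nor shrinks the set of polynomials that actually occur (needed for the division step and for existence). Carrying this out cleanly presupposes that $\hyperprod$ is a well-defined commutative and associative multivalued product, and, for the division algorithm, an explicit criterion for when the coefficients of a candidate quotient by a linear term admit a consistent solution. The reason everything closes over $\T$ — in contrast to a general hyperfield — is that every tropical polynomial splits into linear factors, which is exactly what makes root multiplicities behave additively under $\hyperdot$ and lets the inductive bookkeeping terminate.
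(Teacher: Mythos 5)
Your outline is sound, but it takes a genuinely different route from the paper: the paper obtains Theorem \ref{thm: unique factorization of tropical polynomials} as an essentially immediate consequence of the fundamental theorem for the tropical hyperfield (\cite[Thm.~4.1]{Baker-Lorscheid18b}, restated as Theorem \ref{thm: fundamental theorem for the tropical hyperfield}), combined with the degree-additivity remark from section \ref{subsection: irreducible polynomials}; a remark also gives a second proof that irreducibles are linear by lifting along a surjective morphism $v:K\to\T$ from Puiseux series and invoking Lemma \ref{lemma: factorizations are preserved under morphisms}. You instead propose to re-derive that fundamental theorem from scratch: existence of a root of $t\mapsto\max_i c_it^i$ by a connectedness argument, iterated division by linear terms (which is \cite[Lemma A]{Baker-Lorscheid18b}, i.e.\ Lemma \ref{lemma: characterization of roots}, and is made effective by Theorem \ref{thm: division algorithm for tropical polynomials}), and uniqueness via a Newton-polygon induction. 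What your route buys is self-containedness; what the paper's route buys is that all of the delicate bookkeeping you flag as ``the main obstacle'' is already packaged in the cited theorem.

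Two points need attention if you carry your plan out. First, you explicitly presuppose that $\hyperprod$ is associative, but the paper notes (citing Liu) that $\hyperdot$ on $\Poly(\T)$ is \emph{not} associative; the $n$-fold product is defined by a fixed left-nested recursion and a priori depends on the order of the factors. What you actually need — and must prove, since it is essentially the explicit symmetric description displayed before Theorem \ref{thm: fundamental theorem for the tropical hyperfield} — is that a product of monic linear factors is independent of their order. Without this, your uniqueness induction, which always peels off the \emph{smallest} root as the last factor, does not cover an arbitrary factorization $p\in q_1\hyperdot\dotsb\hyperdot q_n$ into linear terms, which is what the unique factorization property quantifies over; alternatively, run the induction peeling off whatever factor is last and redo the vertex comparison in that generality. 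Second, the coincident-root case (interval-valued coefficients) and the solvability of the division system are exactly the content of the cited \cite[Thm.~4.1]{Baker-Lorscheid18b} and Lemma \ref{lemma: characterization of roots}; at the level of your sketch these are stated as goals rather than proved, so either cite them (whereupon your argument collapses to the paper's) or supply the estimates — e.g.\ the inequalities $c_i\leq a_{i+1}\dotsb a_nc_n$ with equality at breakpoints — in full.
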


The list of irreducible sign polynomials is as follows, which is Theorem \ref{thm: classification of irreducible sign polynomials}.

\begin{thmA}\label{thmB}
  Up to multiplication by $-1$, the irreducible sign polynomials are $T$, $T-1$, $T+1$ and $T^2+1$.
\end{thmA}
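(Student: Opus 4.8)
The plan is to verify that $T$, $T-1$, $T+1$ and $T^2+1$ are irreducible---hence so are their negatives, since irreducibility is preserved under multiplication by the unit $-1$---and then to prove, by strong induction on the degree, that every sign polynomial of positive degree which is not a constant multiple of one of these four is reducible. Since every linear sign polynomial is a unit times $T$, $T-1$ or $T+1$ (its leading coefficient being $\pm1$), this yields exactly the stated list.

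The irreducibility claims are quick. As $\S$ has no zero divisors, the leading coefficient of any element of $q_1\hyperdot q_2$ equals the (nonzero) product of the leading coefficients of $q_1$ and $q_2$; hence every element of $q_1\hyperdot q_2$ has degree $\deg q_1+\deg q_2$, so a nonconstant polynomial of degree $\leq1$ is automatically irreducible, which handles $T$, $T-1$ and $T+1$. For $T^2+1$, a factorization into positive-degree polynomials would be a product of two linear factors; comparing leading coefficients lets us rescale both to be monic, say $T+b$ and $T+d$, and comparing the remaining coefficients then forces $bd=1$ together with $0\in b\hyperplus d$, which is impossible in $\S$.

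For the inductive step let $p$ have degree $n$; since in degrees $\le1$ there is nothing to prove and the only listed polynomial of degree $\ge2$ is $T^2+1$, we may assume $n\ge2$ and that $p$ is not a constant multiple of $T^2+1$. If $p$ has a root $a\in\{0,1,-1\}$, the division algorithm for sign polynomials by linear terms yields $q$ of degree $n-1\ge1$ with $p\in(T-a)\hyperdot q$, so $p$ is reducible. Otherwise $p$ is rootless, and unwinding the hyperaddition of $\S$ shows that this means: the constant term is nonzero (no root at $0$), all nonzero coefficients share a common sign $\varepsilon$ (no root at $1$), and the support of $p$ consists of indices of the same parity (no root at $-1$); as $0$ lies in the support, the support is contained in $2\Z$. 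Thus, after replacing $p$ by $\varepsilon p$, we may write $p=\tilde p(T^2)$ with $\tilde p$ of degree $m=n/2$, all coefficients in $\{0,1\}$, and neither $0$ nor $1$ a root of $\tilde p$. A short lemma---proved by comparing coefficients, the point being that the coefficient of $T^{2i}$ in $v(T^2)\hyperdot w(T^2)$ is the coefficient of $S^i$ in $v\hyperdot w$ while the odd coefficients vanish---shows that the substitution $S\mapsto T^2$ respects hyperproducts: $u\in v\hyperdot w$ implies $u(T^2)\in v(T^2)\hyperdot w(T^2)$. Now distinguish cases. If $m=1$, then $\tilde p=S+1$ and $p=T^2+1$, contrary to assumption. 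If $m\ge2$ and $-1$ is a root of $\tilde p$ (the only candidate, since $0,1$ are not), then $\tilde p\in(S+1)\hyperdot r$ with $\deg r=m-1\ge1$, hence $p\in(T^2+1)\hyperdot r(T^2)$ is reducible. And if $m\ge2$ and $\tilde p$ is rootless, the inductive hypothesis applied to $\tilde p$ gives that either $\tilde p$ is reducible---so $p=\tilde p(T^2)$ is reducible via the substitution lemma---or $\tilde p$ is a constant multiple of $S^2+1$, which by the coefficient restriction means $\tilde p=S^2+1$ and $p=T^4+1$, a case settled by the explicit factorization $T^4+1\in(T^2+T+1)\hyperdot(T^2-T+1)$.

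I expect the main obstacle to be the rootless case: correctly characterizing the rootless sign polynomials and arranging the recursion on $\tilde p$ so that it closes. The bookkeeping stays finite precisely because the substitution lemma transfers reducibility between $p$ and the strictly lower-degree polynomial $\tilde p$, leaving only $T^4+1$ to be handled by hand. The remaining ingredients---additivity of degree under $\hyperdot$, the division algorithm by linear terms, and the finite coefficient checks for $T^2+1$ and $T^4+1$---are routine.
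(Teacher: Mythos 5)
Your argument is correct, but it proceeds quite differently from the paper. The paper's proof is a two-line reduction to real algebra: given a monic irreducible sign polynomial $p$, it lifts the coefficients to reals and uses Lemma \ref{lemma: factorizations are preserved under morphisms} (applied to $\sign:\R\to\S$) to conclude that the lift is irreducible over $\R$; the classification of monic irreducible real polynomials then forces $p$ to be linear or of the form $T^2+aT+1$, and the three quadratic candidates are settled by the root criterion of Lemma \ref{lemma: irreducibility of quadratic polynomials} ($T^2\pm T+1$ have roots $\mp1$, while $T^2+1$ is rootless). You instead stay entirely inside $\S$: after the easy irreducibility checks (your direct coefficient computation for $T^2+1$ is a fine substitute for Lemma \ref{lemma: irreducibility of quadratic polynomials}), you characterize the rootless sign polynomials as $\pm\tilde p(T^2)$ with $\tilde p$ having $0/1$ coefficients and nonzero constant term, prove a substitution lemma $u\in v\hyperdot w\Rightarrow u(T^2)\in v(T^2)\hyperdot w(T^2)$, and run a strong induction, with $T^4+1\in(T^2+T+1)\hyperdot(T^2-T+1)$ checked by hand. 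Each step holds up: the root cases reduce via Lemma \ref{lemma: characterization of roots} (or Theorem \ref{thm: division algorithm for sign polynomials}, whose proof does not depend on the classification, so there is no circularity), degree additivity rules out associates, and the parity/sign analysis of the three evaluations $p(0),p(1),p(-1)$ is exactly right. What the paper's route buys is brevity and a conceptual explanation (irreducibility over $\S$ is controlled by irreducibility over $\R$ through the sign morphism); what your route buys is self-containedness---no appeal to the real classification---plus a by-product of independent interest, namely the explicit description of rootless sign polynomials, at the cost of the extra machinery (substitution lemma, induction, and the special case $T^4+1$).
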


In contrast to tropical polynomials, sign polynomials fail to have unique factorizations in general. For example the sign polynomial $T^3+T^2+T+1$ is contained in both products
\[
 (T+1)\hyperdot(T+1)\hyperdot(T+1) \qquad \text{and} \qquad (T+1)\hyperdot(T^2+1);
\]
cf.\ section \ref{subsection: the failure of unique factorization} for more details. 

\subsection*{Division algorithms} 

A fundamental fact that enters the definition of the multiplicity of a root is that if a polynomial $f$ over a hyperfield $F$ has a root $a\in F$, then $f$ is divisible by $T-a$ (cf.\ \cite[Lemma A]{Baker-Lorscheid18b}). In the case of usual fields, this follows directly from the division algorithm for polynomials. In the case of hyperfields, this algorithm does not work \textit{a priori} due to the ambiguity of the multi-valued addition of the hyperfield. In particular, it happens that there are several quotients of a polynomial by a linear term.

In this text, we describe algorithms for the division tropical and sign polynomials by linear polynomials. These algorithms might be useful for explicit calculations of multiplicities of tropical and sign polynomials, which is of interest for their link to Newton polygons and Descartes' rule of signs. 

The division algorithms for tropical polynomials is follows. Let $p=\sum c_iT^i$ be a tropical polynomial of degree $n$. By Theorem \ref{thmA}, $p$ factors into a unique product $c_n\prod (T+a_i)$ of linear polynomials $T+a_i$ where we assume that $a_1\leq \dotsb\leq a_n$. It follows from the fundamental theorem for the tropical hyperfield (cf.\ \cite[Theorem 4.1]{Baker-Lorscheid18b}) that $\{a_1,\dotsc,a_n\}$ are the roots of $p$, and that the multiplicity $m$ of a root $a$ of $p$ coincides with the number of $a_i$'s that are equal to $a$, i.e.\ 
\[
 a \ = \ a_k \ = \ \dotsc \ = \ a_{k+m-1}
\]
for some $k\in\{1\dotsc,n-m+1\}$ and $a_{k-1}<a_k$ if $k-1\geq1$ as well as $a_{k+m-1}<a_{k+m}$ if $k+m\leq n$. Since the case $a=0$ is trivial, let us assume that $a$ is not zero. The following is Theorem \ref{thm: division algorithm for tropical polynomials}.

\begin{thmA}\label{thmC}
 Define the tropical numbers $d_0,\dotsc,d_{n-1}$ by the following algorithm.
 \begin{enumerate}
  \item[\textup{(1)}] If $k\leq n-m$, then let $d_{n-1}=c_n$. For $i=n-2,\dotsc,k+m-1$, we define (in decreasing order)
        \[
         d_i \ = \ \max\{c_{i+1},\, ad_{i+1}\}.
        \]
  \item[\textup{(2)}] If $k\geq2$, then let $d_0=a^{-1}c_0$. For $i=1,\dotsc,k-2$, we define (in increasing order)
        \[
         d_i \ = \ \max\{a^{-1}c_i,\, a^{-1}d_{i-1}\}.
        \]
  \item[\textup{(3)}] For $i=k-1,\dotsc,k+m-2$, we define
        \[
         d_i \ = \ a_{i+2}\dotsb a_{n}c_n.
        \]
 \end{enumerate}
 Then the polynomial $q=\sum d_iT^i$ is a divisor of $p$ by $T+a$.
\end{thmA}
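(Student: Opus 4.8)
The plan is to verify directly that the polynomial $q=\sum d_iT^i$ produced by the algorithm satisfies $p\in(T+a)\hyperdot q$, i.e.\ that for each $i\in\{0,\dotsc,n\}$ we have $c_i\in d_{i-1}\hyperplus ad_i$ (with the convention $d_{-1}=d_n=0$). Unwinding the tropical hyperaddition, this amounts to showing, for each coefficient index $i$, that $\max\{d_{i-1},\,ad_i\}\geq c_i$, \emph{and} that equality of $c_i$ to this maximum is \emph{not} forced to be strict—more precisely, that either $c_i=\max\{d_{i-1},ad_i\}$, or $d_{i-1}=ad_i$ (so that the hypersum is the whole interval $[0,d_{i-1}]$ and contains $c_i$). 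So the proof reduces to a finite case check over the three ranges of indices defined in steps (1), (2), (3), together with the three ``boundary'' indices $i=n$, $i=k-1$ (lower end of the middle block), and $i=k+m-1$ (upper end).

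First I would treat step (1), the ``top'' range $k+m-1\leq i\leq n$. Here $d_{n-1}=c_n$ gives $ad_{n-1}=ac_n\geq c_n=c_n$ at... wait—rather, at index $i=n$ we need $c_n\in d_{n-1}\hyperplus 0=\{d_{n-1}\}$, which holds since $d_{n-1}=c_n$. For $k+m\leq i\leq n-1$ the recursion $d_i=\max\{c_{i+1},ad_{i+1}\}$ is designed so that $ad_i=\max\{ac_{i+1},a^2d_{i+1}\}\geq ac_{i+1}\geq c_{i+1}$ (using $a\geq$ the relevant comparison)—one shows by downward induction that $d_i\geq c_{i+1}$ always, hence $ad_i\geq c_{i+1}$, wait we want a bound on $c_i$: we have $d_{i-1}=\max\{c_i,ad_i\}\geq c_i$, so $\max\{d_{i-1},ad_i\}=d_{i-1}\geq c_i$, and moreover $d_{i-1}=c_i$ whenever $c_i\geq ad_i$, while $d_{i-1}=ad_i$ otherwise; in the latter subcase the two terms $d_{i-1}$ and $ad_i$ of the hypersum agree, so the hypersum is $[0,d_{i-1}]\ni c_i$. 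Either way $c_i$ lies in $d_{i-1}\hyperplus ad_i$. The symmetric argument, with the roles of ``$\max$ with $a$'' replaced by ``$\max$ with $a^{-1}$'', handles step (2) for the ``bottom'' range $0\leq i\leq k-1$ by upward induction, using $d_0=a^{-1}c_0$ so that at $i=0$ we get $ad_0=c_0$.

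Next comes the middle block, step (3), for $k-1\leq i\leq k+m-2$, where $d_i=a_{i+2}\dotsb a_n c_n$. The key input here is the factorization $p=c_n\prod_{j=1}^n(T+a_j)$ from Theorem~A together with the description of the multiplicity: $a=a_k=\dotsb=a_{k+m-1}$ and the neighboring $a_j$'s are strictly separated from $a$. From the expansion of the product one reads off that $c_i$, for $i$ in this range, equals a tropical sum of monomials $a_{j_1}\cdots a_{j_{n-i}}c_n$ over size-$(n-i)$ subsets of $\{1,\dotsc,n\}$; because $a_1\leq\dotsb\leq a_n$, the dominant such monomial is $a_{i+1}\cdots a_n c_n$, so $c_i\geq a_{i+1}\cdots a_n c_n$, while $ad_i=a\cdot a_{i+2}\cdots a_n c_n=a_{i+1}\cdots a_n c_n$ (here using $a=a_{i+1}$, valid precisely because $k\leq i+1\leq k+m-1$). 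Likewise $d_{i-1}=a_{i+1}\cdots a_n c_n=ad_i$, so for every interior index of the block the two summands $d_{i-1}$ and $ad_i$ coincide, the hypersum is the full interval $[0,\,a_{i+1}\cdots a_n c_n]$, and it contains $c_i$ by the inequality just noted. It remains to check the two seams where the blocks meet—$i=k-1$, where $d_{k-2}$ comes from step (2) (or is $0$/handled by the $c_0$ initialization) and $d_{k-1}$ from step (3), and $i=k+m-1$, where $d_{k+m-2}$ comes from step (3) and $d_{k+m-1}$ from step (1)—and to reconcile the two cases $k\leq n-m$ vs.\ $k=n-m+1$ (root at the top) and $k\geq 2$ vs.\ $k=1$ (root at the bottom), where one of steps (1) or (2) is vacuous and the corresponding initialization is supplied by the step-(3) formula instead. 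I expect these seam computations to be the main obstacle: one must use the strict inequalities $a_{k-1}<a_k$ and $a_{k+m-1}<a_{k+m}$ to show that at these boundary indices $c_i$ genuinely equals $\max\{d_{i-1},ad_i\}$ (the hypersum being a singleton there, not an interval), which is exactly what forces the block-(3) values and makes the two neighboring blocks fit together consistently. Once the seams are checked, assembling the per-index membership statements $c_i\in d_{i-1}\hyperplus ad_i$ over all $i$ yields $p\in(T+a)\hyperdot q$, which is the claim.
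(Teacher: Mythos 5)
Your overall strategy is the same as the paper's: reduce the claim to the coefficient-wise memberships $c_i\in (ad_i)\hyperplus d_{i-1}$, observe that these hold automatically on the step-(1) and step-(2) ranges because $d_{i-1}$ (resp.\ $ad_i$) is \emph{defined} as $\max\{c_i,ad_i\}$ (resp.\ $\max\{c_i,d_{i-1}\}$), handle the interior of the step-(3) block via $ad_i=d_{i-1}=a_{i+1}\dotsb a_nc_n$, and then deal with the two seams $i=k-1$ and $i=k+m-1$. However, there are two problems. First, a direction error in the middle block: from $p\in\hyperprod(T+a_j)$ with $a_1\leq\dotsb\leq a_n$ one gets $c_i\leq a_{i+1}\dotsb a_nc_n$ (with equality exactly when $a_i<a_{i+1}$), not $c_i\geq a_{i+1}\dotsb a_nc_n$ as you write; membership of $c_i$ in the interval $[0,\,a_{i+1}\dotsb a_nc_n]$ needs the $\leq$ bound, so the inequality you state would not even support your own conclusion. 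This looks like a conflation of hyperfield membership (which only bounds $c_i$ above when the maximal monomial is attained twice) with the max-plus semiring identity $c_i=\max(\dotsc)$.

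Second, and more seriously, the seam cases are exactly the nontrivial content of the theorem and you leave them unproven, saying only that you ``expect these seam computations to be the main obstacle.'' What is needed there is not just the strict inequalities $a_{k-1}<a_k$ and $a_{k+m-1}<a_{k+m}$ (which give $c_{k-1}=a_k\dotsb a_nc_n=ad_{k-1}$ and $c_{k+m-1}=a_{k+m}\dotsb a_nc_n=d_{k+m-2}$), but also the compatibility bounds $d_{k-2}\leq a_k\dotsb a_nc_n$ and $ad_{k+m-1}\leq a_{k+m}\dotsb a_nc_n$, which control how the recursively defined blocks (1) and (2) meet block (3). These do not follow from the local definitions at the seam; the paper proves them by two inductions running over the whole of the respective ranges, namely $d_j\leq a_{j+2}\dotsb a_nc_n$ for $j=0,\dotsc,k-2$ (increasing induction, using $a_{j+1}\leq a$ and $c_j\leq a_{j+1}\dotsb a_nc_n$) and $ad_j\leq a_{j+1}\dotsb a_nc_n$ for $j=n-1,\dotsc,k+m-1$ (decreasing induction, using $a\leq a_{j+1}$). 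Without these global estimates your argument does not close, so as it stands the proposal has a genuine gap at precisely the point where the algorithm's three-step structure has to be justified; the easy parts you do carry out coincide with the paper's.
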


The division algorithm for sign polynomials can be described more compactly as follows, which is Theorem \ref{thm: division algorithm for sign polynomials}

\begin{thmA}\label{thmD}
 Let $p=\sum c_iT^i$ be a sign polynomial of degree $n$ where $c_0,\dots,c_n\in\S$. Let $a\in\{\pm 1\}$ be a root of $p$. Define 
 \[
  l \ = \ \min\big\{\, i\in\N\,\big|\,c_i\neq 0\,\big\} \qquad \text{and} \qquad k \ = \ \min\big\{\,i\in\N\,\big|\, c_{i+1}= -a^{i+1-l} c_l\, \big\}.
 \]
 Define recursively for $i=n-1,\dotsc,0$ (in decreasing order)
 \begin{align}
  d_i \ &= \ c_{i+1}        && \text{if $c_{i+1}\neq 0$ and $i>k$;}\\
  d_i \ &= \ ad_{i+1}       && \text{if $c_{i+1}=0$ and $i>k$;}\\
  d_i \ &= \ -a^{i+l-1} c_l && \text{if $l\leq i\leq k$;}\\
  d_i \ &= \ 0              && \text{if $0\leq i<l$.}
 \end{align}
  Then $q=\sum d_iT^i$ is a quotient of $p$ by $T-a$.
\end{thmA}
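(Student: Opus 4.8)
The plan is to unwind the definition of the hyperproduct of polynomials, reducing the assertion $p\in(T-a)\hyperdot q$ to a finite list of coefficientwise memberships in $\S$, and then to verify these by a case analysis on the index. Writing $q=\sum_{i=0}^{n-1}d_iT^i$ and $T-a=1\cdot T+(-a)$, the polynomial $q$ is a quotient of $p$ by $T-a$ precisely when
\[
 c_i\ \in\ (-a)\,d_i\hyperplus d_{i-1}\qquad\text{for every }i=0,\dots,n,
\]
with the convention $d_i=0$ for $i<0$ and for $i\geq n$. Two elementary remarks about $\S$ are used throughout: since $a\in\{\pm1\}$ we have $a^2=1$, so a power $a^m$ depends only on the parity of $m$ and exponents may be shifted by even integers; and for $x\neq0$ in $\S$ one has $x\in x\hyperplus y$ for every $y\in\S$ (the hypersum is $\{x\}$ when $y\in\{0,x\}$ and all of $\S$ when $y=-x$), while $0\in x\hyperplus(-x)$ and $0\hyperplus0=\{0\}$.

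Before the case analysis I would establish two preliminary facts. The first is that $l\leq k\leq n-1$, so that the four clauses defining the $d_i$ are exhaustive and mutually exclusive on $\{0,\dots,n-1\}$ (this is in fact needed for the algorithm to be well defined). Here $k\geq l$ because, for $i<l$, the defining condition $c_{i+1}=-a^{i+1-l}c_l$ fails: its right-hand side is nonzero (as $c_l\neq0$), while $c_{i+1}=0$ when $i+1<l$, and the case $i+1=l$ would force $c_l=-c_l$. The existence of $k$ together with $k\leq n-1$ is where the root hypothesis enters: $a$ being a root means $0\in\hypersum_{i}c_ia^i$, and since $c_l\neq0$ not all summands vanish, so $0$ is attained only if both $1$ and $-1$ occur among the values $c_ia^i$; choosing $i$ with $c_ia^i=-c_la^l$ (necessarily $i>l$) and rewriting this as $c_i=-a^{i-l}c_l$ shows that $i-1$ lies in the set defining $k$, and $i-1\leq n-1$. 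The second preliminary fact, which is exactly why the algorithm is correct at the transition index, is the identity $d_k=c_{k+1}$: by the first fact, clause (3) applies at $i=k$, so $d_k=-a^{k+l-1}c_l=-a^{k+1-l}c_l$ after an even shift of the exponent, and this equals $c_{k+1}$ by the defining property of $k$.

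With these in hand I would check $c_j\in(-a)d_j\hyperplus d_{j-1}$ block by block, following the partition of $\{0,\dots,n\}$ induced by the clauses. For $0\leq j<l$ all of $c_j,d_j,d_{j-1}$ vanish. For $j=l$, clause (3) gives $d_l=-a^{2l-1}c_l$, hence $(-a)d_l=a^{2l}c_l=c_l$, while $d_{l-1}=0$ (clause (4) if $l\geq1$, the out-of-range convention if $l=0$), so the hypersum is $\{c_l\}$. For $l<j\leq k$, the parity remark gives $(-a)d_j=a^{j-l}c_l$ and $d_{j-1}=-a^{j-l}c_l$, opposite nonzero signs, so the hypersum is all of $\S$ and contains $c_j$. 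For $j=k+1$ we use $d_k=c_{k+1}\neq0$: the hypersum $(-a)d_{k+1}\hyperplus c_{k+1}$ contains $c_{k+1}$ regardless of the value $(-a)d_{k+1}$. Finally for $k+1<j\leq n$ the index $j-1$ exceeds $k$, so $d_{j-1}$ comes from clause (1) or (2): if $c_j\neq0$ then $d_{j-1}=c_j$ and $(-a)d_j\hyperplus c_j\ni c_j$; if $c_j=0$ then $d_{j-1}=ad_j$ and $(-a)d_j\hyperplus ad_j\ni0$. (For $j=n$ one has $d_j=d_n=0$, which these arguments accommodate.) This exhausts all indices, so $p\in(T-a)\hyperdot q$; incidentally $d_{n-1}=c_n\neq0$ — by clause (1) if $k<n-1$ and by the identity $d_k=c_{k+1}$ if $k=n-1$ — so $q$ has degree $n-1$.

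I expect the only genuine work to be organizational: tracking the parities of the exponents of $a$ across the four clauses, and checking that the seams of the block decomposition — the degenerate situations $l=0$, $l=k$, and $k=n-1$ — are absorbed by the general argument rather than requiring separate treatment. The single non-formal ingredient is the root hypothesis, used exactly once, to guarantee that the index $k$ exists and satisfies $k\leq n-1$; everything else is the bookkeeping described above.
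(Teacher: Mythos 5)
Your proposal is correct and follows essentially the same route as the paper's proof: a direct coefficientwise verification of $c_i\in(-a)d_i\hyperplus d_{i-1}$, organized into the same blocks $i<l$, $i=l$, $l<i\leq k$, $i=k+1$, $i>k+1$ (split according to $c_i$ zero or not), with the same key seam identity $d_k=c_{k+1}$. The only differences are minor: you handle $a=\pm1$ uniformly via $a^2=1$ where the paper reduces $a=-1$ to $a=1$ through $T\mapsto -T$, and you make explicit that the root hypothesis yields $l\leq k\leq n-1$ (so $k$ is well defined), a point the paper leaves implicit.
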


\subsection*{Acknowledgements}
The authors thank Matt Baker for helpful discussions that led to the results of this paper. We thank Gunn Trevor and Liu Ziqi for correcting some wrong statements of an early draft of this text.




\section{Hyperfields}
\label{section: hyperfields}


A \emph{hyperfield} is a set $F$ together with a multiplication, i.e.\ a map $\cdot:F\times F\to F$, and with a \emph{hyperaddition}, which is a map $\hyperplus:F\times F\to \cP(F)$ where $\cP(F)$ is the power set of $F$, that satisfies the following axioms:
\begin{enumerate}[label={(HF\arabic*)}]
 \item\label{HF1} There are unique elements $0$ and $1$ of $F$ such that $(F,\cdot,1)$ is a commutative monoid and such that $F^\times=F-\{0\}$ is a group with respect to $\cdot$.
 \item\label{HF2} For all $a,b,c\in F$, we have $a\cdot b\hyperplus a\cdot c=\{a\cdot d|d\in b\hyperplus c\}$.                                      \hfill\emph{(distributive)}
 \item\label{HF3} $(F\hyperplus,0)$ is a commutative hypergroup, i.e.\ we have for all $a,b,c\in F$ that
                  \begin{enumerate}[label={(HG\arabic*)}]
                   \item\label{HG1} $a\hyperplus b$ is not empty;                                                                     \hfill\emph{(non-empty sums)}
                   \item\label{HG2} $a\hyperplus b=b\hyperplus a$;                                                                    \hfill\emph{(commutative)}
                   \item\label{HG3} $a\hyperplus 0=\{a\}$;                                                                            \hfill\emph{(neutral element)}
                   \item\label{HG4} there is a unique $d\in F$ such that $0\in a\hyperplus d$;                                        \hfill\emph{(additive inverse)}
                   \item\label{HG5} $\bigcup \{a\hyperplus d | d\in b\hyperplus c\} = \bigcup\{d\hyperplus c | d\in a\hyperplus b\}$. \hfill\emph{(associative)}
                  \end{enumerate}
\end{enumerate}
In the following, we write $ab$ for $a\cdot b$ and $-a$ for the additive inverse of $a$, i.e.\ $0\in a\hyperplus(-a)$. For $n\geq3$ and $a_1,\dotsc,a_n\in F$, we define recursively the subset
\[
 \underset{i=1}{\stackrel{n}\hypersum} a_i \ = \ \bigcup_{b\in\hyperplus_{i=1}^{n-1} a_i} b \hyperplus a_n,
\]
of $F$, which does not depend on the order of the $a_i$ thanks to associativity and commutativity.

The axioms of a hyperfield imply that $0\cdot a=0$ for all $a\in F$ and that 
\begin{enumerate}[label={(HG\arabic*)}]\addtocounter{enumi}{5}
 \item\label{HG6} $a\in b\hypersum c$ if and only if $-b\in (-a)\hypersum c$                                                                     \hfill\emph{(reversibility)}
\end{enumerate}
for all $a,b,c\in F$.

\subsection{Examples}
A primary example of hyperfields are fields. Namely, given a field $K$, we can define a hyperaddition $\hyperplus$ on $K$ by the rule $a\hyperplus b=\{a+b\}$, which turns $K$ into a hyperfield.

To give some examples of hyperfields that do not come from fields, let us introduce the two main characters of our story: the tropical hyperfield $\T$ and the sign hyperfield $\S$.

The \emph{tropical hyperfield $\T$} is the set $\R_{\geq0}$ of nonnegative real numbers together with their usual multiplication and the hyperaddition defined by the rule
\[
 a \hyperplus b \ = \ \begin{cases} \big\{\max\{a,b\}\big\}  &\text{if }a\neq b; \\ [0,a] &\text{if }a=b. \end{cases}
\]
In other words, we have $c\in a\hyperplus b$ if and only if the maximum among $a$, $b$ and $c$ appears twice. Note that $-a=a$ for every $a\in\T$.

The \emph{sign hyperfield $\S$} is the set $\{0,1,-1\}$ together with the obvious multiplication and with the hyperaddition given by the table
\[
\begin{tabular}{|c||c|c|c|}
 \hline
 \small $\hyperplus$ & \small $0$ & \small $1$ & \small $-1$ \\
 \hline\hline 
 \small $0$ & \small $\{0\}$ & \small $\{1\}$ & \small $\{-1\}$ \\
 \hline 
 \small $1$ & \small $\{1\}$ & \small $\{1\}$ & \small $\{0,1,-1\}$ \\
 \hline 
 \small $-1$ & \small $\{-1\}$ & \small $\{0,1,-1\}$ & \small $\{-1\}$ \\
 \hline 
\end{tabular}
\]

\subsection{Morphisms of hyperfields}
\label{subsubsection: morphisms of hyperfields}

A \emph{morphism between hyperfields $F_1$ and $F_2$} is a map $f:F_1\to F_2$ such that $f(0)=0$, $f(1)=1$, $f(ab)=f(a)f(b)$ and $f(a\hyperplus b)\subset f(a)\hyperplus f(b)$ for all $a,b\in F_1$. Note that the latter property is equivalent with requiring that whenever $b\in \hypersum a_i$ in $F_1$, then $f(b)\in\hypersum f(a_i)$ in $F_2$.

Let us describe the two examples of morphisms of hyperfields that are of interest for our purpose. The first example is that of the sign map $\sign:\R\to \S$ that associates with a nonzero real number $a\in\R$ its sign $\sign(a)=a/|a|$ and that maps $0$ to $0$.

The second example is based on a general fact observed by Viro in \cite{Viro11}. Namely, by identifying a field $K$ with its associated hyperfield and the nonnegative real numbers $\R_{\geq0}$ with the tropical hyperfield $\T$ as sets, a nonarchimedean absolute value $v:K\to\R_{\geq0}$ is the same as a morphism of hyperfields $v:K\to\T$.


\section{Polynomials over hyperfields}
\label{section: polynomials}

A \emph{polynomial over a hyperfield $F$} is an expression of the form $p=c_nT^n+\dotsb+c_1T+c_0$ with $c_0,\dotsc,c_{n}\in F$, or, more formally, a sequence $(c_i)_{i\in\N}$ of elements $c_i\in F$ for which $\{i\in\N|c_i\neq 0\}$ is finite. We denote the set of all polynomials over $F$ by $\Poly(F)$. 

Note that for a field $K$, $\Poly(K)$ is equal to the usual polynomial algebra $K[T]$. For reasons explained in \cite[Appendix A]{Baker-Lorscheid18b}, we refrain from the notation $F[T]$ for the set $\Poly(F)$ of polynomials over a hyperfield $F$.


We will identify elements $a$ of $F$ with the constant polynomial $p=a$ over $F$. In particular, we write $0$ for the zero polynomial $p=0$ and $1$ for the constant polynomial $p=1$.


\subsection{Hyperproducts}
\label{subsection: hyperproducts}

The multiplication and hyperaddition of a hyperfield $F$ endows the set $\Poly(F)$ of polynomials over $F$ with an additive and a multiplicative structure. The additive structure is the hyperaddition on $\Poly(F)$ that results from the hyperaddition of coefficients, which might not come as a surprise. Since the hyperaddition of polynomials is not of interest for our present purpose, we omit a discussion, but refer the reader to \cite[Appendix A]{Baker-Lorscheid18b} for details.

The multiplicative structure of $\Poly(F)$ is the \emph{hypermultiplication} 
\[
 \hyperdot: \ \Poly(F)\times\Poly(F) \quad \longrightarrow \quad \cP\big(\Poly(F)\big)
\]
that maps a pair of polynomials $p=\sum c_iT^i$ and $q=\sum d_iT^i$ to the subset
\[\textstyle
 p \hyperdot q \ = \ \big\{ \, \sum e_iT^i \, \big| \, e_i\in\underset{k+l=i}\hypersum c_kd_l \, \big\}
\]
of $\Poly(F)$. Note that in the case of a hyperfield coming from a field $F$, $p\hyperdot q=\{pq\}$ is the singleton containing the usual product of $p$ and $q$.

It is easily verified that this hypermultiplication satisfies the following properties in analogy to that of a hyperaddition (cf.\ \ref{HG1}--\ref{HG3} in section \ref{section: hyperfields}): for all $p,q\in\Poly(F)$, we have
\begin{enumerate}[label={(HM\arabic*)}]
 \item\label{HM1} $p\hyperdot q$ is not empty;                                                                      \hfill\emph{(non-empty sums)}
 \item\label{HM2} $p\hyperdot q=q\hyperdot p$;                                                                      \hfill\emph{(commutative)}
 \item\label{HM3} $p\hyperdot 1=\{p\}$.                                                                             \hfill\emph{(neutral element)}
\end{enumerate}
Similar as for the hyperaddition of a hyperfield, we extend $\hyperdot$ recursively to $n$-fold products by the rule
\[
 \underset{i=1}{\stackrel{n}\hyperprod} q_i \ = \ \bigcup_{p\in\hyperdot_{i=1}^{n-1} q_i} p \hyperdot q_n.
\]

Note that the definition of the $n$-fold product depends on the order of the factors in general since, in contrast to the situation over a field, $\hyperdot$ fails to be associative for some hyperfields. This is, in particular, the case for $\Poly(\T)$ and $\Poly(\S)$, as shown in \cite{Liu19}.

\subsection{The degree}
\label{subsection: degree}

The \emph{degree} of a polynomial $p=c_nT^n+...+c_0$ over a hyperfield $F$ is the largest $k\in\N$ such that $c_k\neq 0$, which we denote by $\deg p$. 

For $n$ polynomials $q_1,\dotsc, q_n$, we have $\deg p=\sum\deg q_i$ for every $p\in\hyperprod q_i$. In so far, $1\in p\hyperprod q$ implies that $\deg p=\deg q=0$ and thus $p=a$ and $q=a^{-1}$ for some $a\in F^\times$. For every $p=\sum c_nT^i$, we have $p\hyperdot aT^k=\big\{\sum ac_iT^{i+k}\big\}$ and, in particular, $p\hyperdot 0=\{0\}$. We write $-p$ for the unique element $\sum(-c_n)T^n$ in $(-1)\hyperdot p$.


\subsection{Factorizations}
\label{subsection: Factorization}

We say that $p$ and $q$ are \emph{associated}, and write $p\sim q$, if $p\in a\hyperdot q$ for some $a\in F^\times$. Note that $\sim$ is an equivalence relation. A polynomial $p$ is \emph{monic} if $c_{\deg p}=1$. We conclude that for every nonzero polynomial $p$ there is a unique monic polynomial $q$ with $p\sim q$.


Let $p,q_1,\dotsc,q_n \in\Poly(F)$ be polynomials over $F$. We say that \emph{$p$ factors into the product of $q_1,\dotsc,q_n$} if $p\in\hyperprod q_i$.

We can extend a morphism $f:F_1\to F_2$ to a map $f:\Poly(F_1)\to\Poly(F_2)$ between polynomials: given a polynomial $p=\sum c_iT^i$ over $F_1$, we define $f(p)=\sum f(c_i)T^i$.

\begin{lemma}\label{lemma: factorizations are preserved under morphisms}
 Let $f:F_1\to F_2$ be a morphism of hyperfields, $n\geq2$ and $p,q_1,\dotsc,q_n \in\Poly(F_1)$ such that $p\in\hyperprod q_i$. Then $f(p)\in \hyperprod f(q_i)$.
\end{lemma}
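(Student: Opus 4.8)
The plan is to reduce the statement to the single-hyperproduct case and then iterate. First I would handle the base case $n=2$: suppose $p\in q_1\hyperdot q_2$, write $p=\sum e_iT^i$, $q_1=\sum c_iT^i$, $q_2=\sum d_iT^i$, so that by definition $e_i\in\hypersum_{k+l=i}c_kd_l$ for each $i$. Applying $f$ and using that a morphism satisfies $f(a_1\hypersum\dotsb\hypersum a_m)\subset f(a_1)\hypersum\dotsb\hypersum f(a_m)$ (as recorded in section~\ref{subsubsection: morphisms of hyperfields}, together with $f(ab)=f(a)f(b)$), we get $f(e_i)\in\hypersum_{k+l=i}f(c_k)f(d_l)$. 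Hence $f(p)=\sum f(e_i)T^i$ lies in $f(q_1)\hyperdot f(q_2)$ by the definition of the hyperproduct of polynomials. A small point to note: the sum $\hypersum_{k+l=i}$ ranges over a fixed finite index set, so there is no issue with infinitely many terms, and the morphism property for finite hypersums follows from the binary one by the recursive definition of $\hypersum$.

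Next I would do the induction on $n$. Assume the statement for $n-1$ factors. If $p\in\hyperprod_{i=1}^n q_i$, then by the recursive definition of the $n$-fold hyperproduct there exists $p'\in\hyperprod_{i=1}^{n-1}q_i$ with $p\in p'\hyperdot q_n$. By the inductive hypothesis $f(p')\in\hyperprod_{i=1}^{n-1}f(q_i)$, and by the base case $f(p)\in f(p')\hyperdot f(q_n)$. Therefore $f(p)\in\bigcup_{r\in\hyperprod_{i=1}^{n-1}f(q_i)} r\hyperdot f(q_n)=\hyperprod_{i=1}^n f(q_i)$, which is exactly what we want. The induction is clean because the definition of the $n$-fold hyperproduct is set up precisely as ``pick an element of the $(n-1)$-fold product, then multiply by the last factor.''

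I do not anticipate a serious obstacle here; the proof is essentially unwinding definitions. The one thing to be careful about is the direction of the containment in the morphism axiom — it gives $f(a\hypersum b)\subset f(a)\hypersum f(b)$, which is exactly the direction needed (we have $e_i$ in a hypersum and we want $f(e_i)$ in the corresponding hypersum of images), so nothing goes wrong. If I wanted to be maximally efficient I could skip the explicit induction and simply remark that the $n$-fold statement follows from the $n=2$ statement by the recursive definition of $\hyperprod$ together with an obvious induction, but spelling out the one-line inductive step as above makes the argument self-contained.
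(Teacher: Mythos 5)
Your proof is correct and follows essentially the same route as the paper: the $n=2$ case by unwinding the definition of $\hyperdot$ and applying the morphism property to each coefficient hypersum, followed by induction on $n$ using the recursive definition of the $n$-fold hyperproduct.
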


\begin{proof}
 Let $p=\sum c_jT^j$ and $q_i=\sum d_{i,j}T^j$. We prove the claim by induction on $n\geq2$.
 
 If $n=2$, then $p\in q_1\hyperdot q_2$ means that $c_i\in \hypersum_{k+l=i} d_{1,k}d_{2,l}$ in $F_1$. Since $f$ is a morphism of hyperfields, we have $f(c_i)\in \hypersum_{k+l=i} f(d_{1,k})f(d_{2,l})$ in $F_2$, and thus $f(p)\in f(q_1)\hyperdot f(q_2)$ as claimed.
 
 If $n>2$, then $r\in\hyperdot_{i=1}^{n-1} q_i$ and the inductive hypothesis imply that $f(r)\in \hyperdot_{i=1}^{n-1} f(q_i)$. This and the case $n=2$ show that $p\in\hyperprod q_i$ implies that 
 \[
  f(p) \ \in \ f\Big(\hspace{-10pt}\bigcup_{r\in\hyperdot_{i=1}^{n-1} q_i} r \hyperdot q_n \Big) \ = \ \bigcup_{r\in\hyperdot_{i=1}^{n-1} q_i} f\big(r \hyperdot q_n \big)  \ \subset \ \bigcup_{f(r)\in\hyperdot_{i=1}^{n-1} f(q_i)} f(r) \hyperdot f(q_n),
 \]
 which establishes the claim of the lemma.
\end{proof}


\subsection{Irreducible polynomials}
\label{subsection: irreducible polynomials}

We say that a polynomial $p=\sum c_iT^i$ over a hyperfield $F$ is \emph{irreducible} if $\deg p\geq 1$ and if for every factorization $p\in q_1\hyperdot q_2$, we have $p\sim q_1$ or $p\sim q_2$. Note that if $p\sim q$, then $p$ is irreducible if and only if $q$ is irreducible. 

Since $p\in q_1\hyperdot q_2$ implies $\deg p=\deg q_1+\deg q_2$, we have $p\sim q_1$ if and only if $\deg p=\deg q_1$, or if, equivalently, $q_2=c_0\in F^\times$ is a constant nonzero polynomial. It follows that every linear polynomial $p=c_1T+c_0$ (with $c_1\neq 0$) is irreducible. In Lemma \ref{lemma: irreducibility of quadratic polynomials}, we give an irreducibility criterion for quadratic and cubic polynomials. 

The following fact was pointed out to us by Trevor Gunn.

\begin{lemma}\label{lemma: irreducibility for factorizations into n factors}
 Let $p\in\Poly(F)$ be irreducible, $n\geq2$ and $p\in \hyperprod_{i=1}^n q_i$ a factorization. Then there is an $i\in\{1,\dotsc,n\}$ such that $p\sim q_i$.
\end{lemma}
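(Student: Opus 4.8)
The plan is an induction on $n$, using the recursive definition of the $n$-fold hyperproduct, which isolates the last factor and therefore sidesteps the absence of associativity entirely.

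For the base case $n=2$ there is nothing to prove: by definition $\hyperprod_{i=1}^2 q_i = q_1\hyperdot q_2$, and the irreducibility of $p$ applied to the factorization $p\in q_1\hyperdot q_2$ is exactly the assertion that $p\sim q_1$ or $p\sim q_2$.

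For the inductive step, assume the claim holds for $n-1$ factors and let $p\in\hyperprod_{i=1}^n q_i$. By the recursive definition there is an intermediate polynomial $r\in\hyperprod_{i=1}^{n-1}q_i$ with $p\in r\hyperdot q_n$. Since $p$ is irreducible, either $p\sim q_n$, in which case we are done with $i=n$; or $p\sim r$. In the latter case I would first observe that $p\sim r$ forces $r$ to be irreducible as well (the remark recorded just before the lemma, together with the fact that $\sim$ is an equivalence relation), and that $r$ already lies in the $(n-1)$-fold product $\hyperprod_{i=1}^{n-1}q_i$; the inductive hypothesis then produces an index $i\in\{1,\dotsc,n-1\}$ with $r\sim q_i$, and transitivity of $\sim$ gives $p\sim q_i$.

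The argument is short and I do not anticipate a genuine obstacle. The only points that need a word of care are that $p$ being irreducible entails $\deg p\geq 1$, hence $p\neq 0$, so that the intermediate factor $r$ is nonzero and the relation $p\sim r$ is meaningful; and the already-noted fact that being associated preserves irreducibility, which is precisely what lets the induction close.
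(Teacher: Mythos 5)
Your proposal is correct and follows essentially the same route as the paper: induction on $n$, using the recursive definition of the $n$-fold hyperproduct to extract $r\in\hyperprod_{i=1}^{n-1}q_i$ with $p\in r\hyperdot q_n$, then irreducibility of $p$, the fact that associates of irreducibles are irreducible, and the inductive hypothesis. No gaps.
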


\begin{proof}
 We prove the claim by induction on $n$. The case $n=2$ follows by the definition of irreducibility.
 
 Assume that $n>2$. By the definition of $\hyperprod_{i=1}^n q_i$, there is an $r$ in $\hyperprod_{i=1}^{n-1} q_i$ such that $p\in r\hyperdot q_n$. Since $p$ is irreducible, we have $p\sim r$ or $p\sim q_n$. If $p\sim q_n$, then there is nothing to prove. If $p\sim r$, then $r$ is also irreducible. By the inductive hypothesis, applied to $r$, we have $p\sim r\sim q_i$ for some $i\in\{1,\dotsc,n-1\}$, which completes the proof of the lemma.
\end{proof}

We say that $\Poly(F)$ \emph{has the unique factorization property} if for any two factorizations $p\in q_1\hyperdot \dotsb\hyperdot q_n$ and $p\in q'_1\hyperdot \dotsb\hyperdot q'_m$ into irreducible factors $q_1,\dotsc, q_n,q'_1,\dotsc,q'_m$, we have $n=m$ and $q_i\sim q'_{\sigma(i)}$ for some permutation $\sigma$ of $\{1,\dotsc,n\}$.


We conclude with the following implication of unique factorization on a weakened form of associativity. By definition, we have $p\hyperdot q\hyperdot r=(p\hyperdot q)\hyperdot r$. In contrast, $p\hyperdot (q\hyperdot r)$ must be read as $\bigcup\{p\hyperdot s | s\in q\hyperdot r\}$.

\begin{lemma}\label{lemma: unique factorization implies associativity for irreducible polynomials}
 If $\Poly(F)$ has the unique factorization property, then $(p\hyperdot q)\hyperdot r=p\hyperdot (q\hyperdot r)$ for all irreducible $p,q,r\in\Poly(F)$.
\end{lemma}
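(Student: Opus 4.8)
The plan is to show the two iterated products coincide by sandwiching each inside the other via the unique factorization property. Fix irreducible $p,q,r$. First I would analyze what a polynomial $s$ in $(p\hyperdot q)\hyperdot r$ looks like: by definition there is some $t\in p\hyperdot q$ with $s\in t\hyperdot r$, so $s\in\hyperprod(p,q,r)$ is a threefold product of irreducibles, hence $\deg s=\deg p+\deg q+\deg r$. I would like to conclude that $s$ lies in $p\hyperdot(q\hyperdot r)$ as well, i.e.\ that there is some $u\in q\hyperdot r$ with $s\in p\hyperdot u$. The natural idea is: run unique factorization on $s$. But $s$ need not be irreducible, so I cannot apply the property directly to $s$; instead I would apply it to a genuine equality of factorizations that $s$ participates in.

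The key observation I would exploit is that $s\in\hyperdot_{i=1}^{3}q_i$ with $q_1=p$, $q_2=q$, $q_3=r$ means $s$ factors into the three irreducibles $p,q,r$ (in that order). I would then want to re-bracket this as $p\hyperdot(q\hyperdot r)$. Concretely: pick $t\in p\hyperdot q$ with $s\in t\hyperdot r$. Since $p,q$ are irreducible and $t\in p\hyperdot q$, $t$ is a product of the two irreducibles $p,q$; now I want instead a $u\in q\hyperdot r$ and a factorization $s\in p\hyperdot u$. The route I would take: because $s\in t\hyperdot r$ and $t\in p\hyperdot q$, by associativity of the set-level union defining threefold products (which holds by definition for the left-bracketing), $s\in\hyperprod(p,q,r)$. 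The content is to pass from the ordering $(p,q,r)$ to the bracketing $p\hyperdot(q\hyperdot r)$. For this I would argue symmetrically using Lemma \ref{lemma: irreducibility for factorizations into n factors} applied cleverly, or—cleaner—directly from unique factorization: any $s$ arising as a threefold product of $p,q,r$ in any bracketing has, as its unique factorization into irreducibles, the multiset $\{p,q,r\}$ up to association; and conversely. So I would prove: for irreducible $p,q,r$, a polynomial $s$ lies in $(p\hyperdot q)\hyperdot r$ \emph{if and only if} $s$ lies in $p\hyperdot(q\hyperdot r)$, by showing both sets equal $\{\,s : s$ has unique factorization associated to the multiset $\{p,q,r\}\,\}$. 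One inclusion of each description is the definition of a threefold product; the reverse inclusion is where unique factorization does the work: if $s$'s irreducible factorization is $\{p,q,r\}$, then in particular $s\in\hyperprod(\text{some ordering})$, and I must produce the specific bracketed membership.

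Here is the step I expect to be the genuine obstacle, and how I would handle it. Showing "$s$ has irreducible factorization $\{p,q,r\}$ implies $s\in p\hyperdot(q\hyperdot r)$" requires starting from \emph{some} membership $s\in q'_1\hyperdot q'_2\hyperdot q'_3$ (a left-bracketed triple) with $\{q'_i\}$ associated to $\{p,q,r\}$, and converting it to $p\hyperdot(q\hyperdot r)$. Using commutativity \ref{HM2} of $\hyperdot$ and the fact that associates can be absorbed into constant factors (so $T\hyperdot(aq')=aT\hyperdot q'$ type manipulations, using $\deg$-homogeneity from section \ref{subsection: degree}), I would first reduce to the case $q'_1=p$, $q'_2=q$, $q'_3=r$ exactly, by permuting and rescaling: if $s\in q'_1\hyperdot q'_2\hyperdot q'_3$ and $q'_3\sim r$, say $r\in b\hyperdot q'_3$, then since $p\hyperdot aT^k=\{ap T^k\}$-style identities let constants move freely, $s$ (or its associate $b s$) lies in $q'_1\hyperdot q'_2\hyperdot r$; iterate. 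Then $s\in (p\hyperdot q)\hyperdot r$ already in the right ordering; so the only real task is the bracketing swap $(p\hyperdot q)\hyperdot r\rightarrow p\hyperdot(q\hyperdot r)$ with all three factors \emph{irreducible}. For this swap I would run the argument once more with roles permuted: $p\hyperdot(q\hyperdot r)=\bigcup_{u\in q\hyperdot r}p\hyperdot u$, and every such $u$ is a product of irreducibles $q,r$, so $p\hyperdot u\subseteq\hyperprod(p,q,r)$-associated polynomials, giving $p\hyperdot(q\hyperdot r)\subseteq\{s:\text{factorization }\{p,q,r\}\}$; combined with the reverse (via commutativity $p\hyperdot(q\hyperdot r)=(q\hyperdot r)\hyperdot p=\dots$ and unfolding definitions to realize each target $s$), both sets coincide. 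The crux is therefore purely bookkeeping with $\deg$-homogeneity and \ref{HM2}; no new ideas beyond unique factorization are needed, which is exactly why the lemma is true and its proof should be short.
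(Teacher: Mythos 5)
Your outline is in fact the same as the paper's: both reduce the lemma to the claim that membership in a threefold product of the irreducibles $p,q,r$ depends only on the multiset of factors, and then use commutativity of $\hyperdot$ to handle the bracketing. You also correctly locate the crux. But your proposal never closes it, and that unclosed step is the entire content of the lemma. The inclusions you do establish are the easy ones: every $s\in(p\hyperdot q)\hyperdot r$, and every $s\in p\hyperdot(q\hyperdot r)=\bigcup_{u\in q\hyperdot r}p\hyperdot u=\bigcup_{u\in q\hyperdot r}u\hyperdot p=(q\hyperdot r)\hyperdot p$, lies in a left-bracketed product of the irreducibles $p,q,r$, hence has $\{p,q,r\}$ as its irreducible factors by uniqueness. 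The reverse inclusions --- from ``the factorization of $s$ is $\{p,q,r\}$'', i.e.\ from membership in \emph{some} ordering and bracketing, to membership in the \emph{specific} one --- are what must be proved, and your handling of them is circular: ``running the argument once more with roles permuted'' only reproduces the forward inclusions, and ``unfolding definitions to realize each target $s$'' is not an argument. Nor can the tools you allot yourself do the job: constants do move freely through $\hyperdot$ (via $p\hyperdot aT^k=\{apT^k\}$ and distributivity), and commutativity \ref{HM2} swaps the two arguments of a single binary product, but neither ever moves a factor past a bracket, e.g.\ from $(q_1\hyperdot q_2)\hyperdot q_3$ to $(q_1\hyperdot q_3)\hyperdot q_2$ --- which is exactly the reordering your ``iterate'' step needs and exactly what fails in general, since $\hyperdot$ is not associative. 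The unique factorization property, as defined, is a uniqueness statement about two \emph{given} factorizations; it does not by itself manufacture a factorization in a new order or bracketing, so no amount of degree bookkeeping plus \ref{HM2} will produce the reverse inclusion.

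Compare this with the paper's two-line proof: it extracts from the unique factorization hypothesis directly the equivalence that $s\in(p\hyperdot q)\hyperdot r$ if and only if $s\in(q\hyperdot r)\hyperdot p$ --- that is, it reads the hypothesis as saying that membership in a product of irreducibles depends only on the factors up to order and association --- and then performs the one purely formal step, $(q\hyperdot r)\hyperdot p=p\hyperdot(q\hyperdot r)$ by commutativity. So the repair of your proposal is not more rescaling: isolate the permutation-invariance statement (for irreducibles $q_1,q_2,q_3$, membership in $q_1\hyperdot q_2\hyperdot q_3$ depends only on the multiset of factors up to association) and obtain it from the hypothesis as the paper does; then both sides of the lemma are left-bracketed threefold products of the same irreducibles, and the top-level commutativity identity finishes the proof. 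As written, your proposal has a genuine gap at precisely the point where this invariance is needed.
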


\begin{proof}
 Let $p,q,r\in\Poly(F)$ irreducible polynomials. Since $\Poly(F)$ has the unique factorization property, we have $s\in (p\hyperdot q)\hyperdot r$ if and only if $s\in (q\hyperdot r)\hyperdot p$. Using the commutativity of $\hyperdot$, we find the desired equality $(p\hyperdot q)\hyperdot r=(q\hyperdot r)\hyperdot p=p\hyperdot (q\hyperdot r)$. 
\end{proof}

\subsection{Roots}
\label{subsection: roots}

Let $a\in F$ and $p=\sum c_iT^i$ be a polynomial over a hyperfield $F$. We say that \emph{$a$ is a root of $p$}, and write $0\in p(a)$, if $0\in \hypersum c_ia^i$. Alternatively, we can characterize roots in terms of the following fact, which is Lemma A in \cite{Baker-Lorscheid18b}.

\begin{lemma}\label{lemma: characterization of roots}
 Let $a\in F$ and $p\in\Poly(F)$. Then $0\in p(a)$ if and only if there exists a $q\in\Poly(F)$ such that $p\in (T-a)\hyperdot q$.
\end{lemma}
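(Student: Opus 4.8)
The plan is to prove both directions of the equivalence $0\in p(a)\iff p\in(T-a)\hyperdot q$ for some $q$, working directly with the definition of the hyperproduct of polynomials in terms of the coefficient-wise hypersums.

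\medskip

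\emph{The easy direction} ($\Leftarrow$). Suppose $p=\sum c_iT^i\in(T-a)\hyperdot q$ with $q=\sum d_iT^i$. Writing $T-a$ as the polynomial with coefficients $b_1=1$, $b_0=-a$ (and $b_i=0$ otherwise), the definition of $\hyperdot$ gives $c_i\in d_{i-1}\hyperplus(-a)d_i$ for each $i$ (with the convention $d_{-1}=d_{n}=0$, say $\deg q=n-1$). I would then evaluate: I claim $0\in\hypersum_i c_ia^i$. The idea is a telescoping/reversibility argument. Since $c_i\in d_{i-1}\hyperplus(-a d_i)$, multiplying through by $a^i$ (using distributivity \ref{HF2}) gives $c_ia^i\in d_{i-1}a^i\hyperplus(-d_i a^{i+1})$. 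Summing over $i$ and repeatedly using associativity \ref{HG5} and commutativity, the terms $d_{i-1}a^i$ and $-d_i a^{i+1}$ pair up and cancel (each $d_j a^{j+1}$ appears once with a $+$ and once with a $-$), so $0$ lies in the total hypersum; hence $0\in p(a)$. The bookkeeping here is routine but needs care, so I would phrase it as: let $e_i\in d_{i-1}a^i\hyperplus(-d_ia^{i+1})$ be chosen so that $e_i$ contributes $c_ia^i$ — actually, more cleanly, note $\sum c_ia^i$ ranges over a subset of $\hypersum_i\big(d_{i-1}a^i\hyperplus(-d_ia^{i+1})\big)$, and the latter contains $0$ by telescoping.

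\medskip

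\emph{The hard direction} ($\Rightarrow$). This is the substantive content, and it is exactly the statement of \cite[Lemma A]{Baker-Lorscheid18b} — so strictly I may simply cite it. But if a self-contained argument is wanted, the approach is constructive: given $0\in\hypersum_i c_ia^i$, I must produce coefficients $d_0,\dots,d_{n-1}$ with $c_i\in d_{i-1}\hyperplus(-ad_i)$ for all $i$. The natural candidates come from the ``synthetic division'' recursion: set $d_{n-1}=c_n$ and $d_{i-1}\in c_i\hyperplus a d_i$ for decreasing $i$, choosing at each step \emph{some} element of the hypersum (nonempty by \ref{HG1}); this forces $c_i\in d_{i-1}\hyperplus(-ad_i)$ by reversibility \ref{HG6}. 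The one thing that must be checked is the final constraint at the bottom: that the recursion can be steered so that $c_0\in d_{-1}\hyperplus(-ad_0)$ with $d_{-1}=0$, i.e.\ $c_0\in\{-ad_0\}$, equivalently $-ad_0=c_0$. This is where the hypothesis $0\in\hypersum c_ia^i$ is used: unwinding the recursion, $ad_0\hyperplus$ (the accumulated tail) always contains $-\hypersum_{i\ge1}c_ia^i\cdot a^{-1}$-type expressions, and the root condition guarantees a consistent choice exists. Making this precise is the main obstacle; the cleanest route is to induct on $\deg p$, peeling off the top coefficient and invoking reversibility to reduce the root condition for $p$ to a root condition for the degree-$(n-1)$ polynomial $\sum_{i\ge 1}(c_i\hyperplus a d_i)\cdots$ — or, frankly, to cite Lemma A of \cite{Baker-Lorscheid18b} verbatim, since the present paper explicitly builds on that reference. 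I expect the authors take the citation route, stating the lemma for completeness as the definitional basis for multiplicities.
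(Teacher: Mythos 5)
You are right that the paper gives no proof of this lemma at all: it is imported verbatim as \cite[Lemma A]{Baker-Lorscheid18b}, so your citation route is exactly the paper's approach, and your guess about the authors' treatment is correct.

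However, the self-contained sketch you offer for the direction you call easy has a genuine flaw as written. From $c_ia^i\in d_{i-1}a^i\hyperplus(-d_ia^{i+1})$ you pass to ``$\hypersum_i c_ia^i$ is a subset of the hypersum of all the terms $d_{i-1}a^i$ and $-d_ia^{i+1}$, and that larger set contains $0$ by telescoping'' --- but $0$ lying in the \emph{superset} says nothing about the subset $\hypersum_i c_ia^i$, which is what you need; the inference runs in the wrong direction. (For instance in $\S$ the set $1\hyperplus(-1)$ contains $0$, yet a particular element of it, such as $1$, does not inherit the cancellation.) The correct argument is a downward induction using reversibility \ref{HG6}: from $c_i\in d_{i-1}\hyperplus(-ad_i)$ one gets $d_{i-1}\in c_i\hyperplus ad_i$, hence $d_{i-1}a^{i}\in c_ia^{i}\hyperplus d_ia^{i+1}$; starting from $d_{n-1}a^n=c_na^n$ this yields $d_{k-1}a^{k}\in\hypersum_{i\geq k}c_ia^{i}$ for all $k$, and at $k=1$, since $c_0=-ad_0$, one concludes $-c_0\in\hypersum_{i\geq 1}c_ia^i$ and therefore $0\in\hypersum_{i\geq 0}c_ia^i$. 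For the converse direction your sketch is, as you yourself note, incomplete: choosing arbitrary elements $d_{i-1}\in c_i\hyperplus ad_i$ need not terminate with the forced equality $c_0=-ad_0$, and steering the choices by the root hypothesis is precisely the nontrivial content of \cite[Lemma A]{Baker-Lorscheid18b}; citing it, as the paper does, is the honest resolution.
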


Note that if $p=\sum c_iT^i$ and $q=\sum d_iT^i$, then the relation $p\in (T-a)\hyperdot q$ is equivalent with $n=\deg p=1+\deg q$ and the relations 
\[
 c_0=-ad_0, \quad c_i \in  (-ad_i) \hyperplus d_{i-1} \quad \text{for } i=1,\dots, n-1, \quad \text{and} \quad c_{n}=d_{n-1}.
\]
\begin{ex}
 In the case of a field $K$, we have $0\in p(a)$ in the hyperfield sense if and only if $0=p(a)$ in the usual sense, and we have $p\in (T-a)\hyperdot q$ in the hyperfield sense if and only if $p=(T-a)q$ in the usual sense. 
\end{ex}

As an immediate consequence of Lemma \ref{lemma: characterization of roots}, we see that an irreducible polynomial of degree at least $2$ cannot have any roots. For quadratic and cubic polynomials, this implication can be reversed.

\begin{lemma}\label{lemma: irreducibility of quadratic polynomials}
 Let $p$ be a polynomial over $F$ of degree $2$ or $3$. Then $p$ is irreducible if and only if $p$ does not have a root in $F$.
\end{lemma}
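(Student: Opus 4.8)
The plan is to prove the nontrivial direction: if a polynomial $p$ of degree $2$ or $3$ has no root in $F$, then $p$ is irreducible. (The converse is already noted right before the statement as a consequence of Lemma \ref{lemma: characterization of roots}.) So suppose $p$ has degree $2$ or $3$ and is \emph{not} irreducible, and I aim to produce a root. By the definition of irreducibility, there is a factorization $p\in q_1\hyperdot q_2$ with $\deg q_1,\deg q_2\geq 1$. Since $\deg p=\deg q_1+\deg q_2\in\{2,3\}$ and both factors have positive degree, at least one of $q_1,q_2$ — say $q_1$ — has degree exactly $1$, so $q_1=c_1T+c_0$ with $c_1\in F^\times$.

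The next step is to reduce to a factorization by a monic linear term of the form $T-a$. Write $q_1 = c_1(T + c_1^{-1}c_0)$. Using the observations in section \ref{subsection: degree} (namely $p\hyperdot aT^k=\{\sum ac_iT^{i+k}\}$, so that associating by a nonzero constant is compatible with hyperproducts), from $p\in q_1\hyperdot q_2$ I get $p\in (T+c_1^{-1}c_0)\hyperdot (c_1 q_2) = (T+c_1^{-1}c_0)\hyperdot q_2'$ for the polynomial $q_2'=c_1 q_2$. Setting $a=-c_1^{-1}c_0$, this reads $p\in (T-a)\hyperdot q_2'$. Now Lemma \ref{lemma: characterization of roots} applies directly: the existence of $q_2'\in\Poly(F)$ with $p\in(T-a)\hyperdot q_2'$ is equivalent to $0\in p(a)$, so $a$ is a root of $p$ in $F$. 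This contradicts the hypothesis, so $p$ must be irreducible. Conversely, if $p$ is irreducible of degree $\geq 2$ it has no root, again by Lemma \ref{lemma: characterization of roots}, since a root would yield a factorization $p\in(T-a)\hyperdot q$ with $\deg(T-a)=1$ and $\deg q=\deg p-1\geq 1$, so neither factor is constant.

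I do not expect any real obstacle here; the only point requiring a little care is the bookkeeping in the reduction to a monic linear factor — specifically, justifying that $p\in q_1\hyperdot q_2$ and $q_1\sim (T-a)$ together give $p\in (T-a)\hyperdot q_2'$ for some $q_2'$. This follows from the single-valuedness of multiplication by a nonzero constant polynomial (the identity $p\hyperdot aT^k=\{\sum ac_iT^{i+k}\}$ in section \ref{subsection: degree}), which lets one absorb the constant $c_1$ into $q_2$ without changing membership; alternatively one can unwind the coefficient relations directly as listed after Lemma \ref{lemma: characterization of roots}. The degree constraint $\deg p\in\{2,3\}$ is exactly what forces one factor to be linear, which is why the lemma is stated only for these degrees; for degree $4$ one could instead have a product of two irreducible quadratics, and the argument would break down precisely at the step "at least one factor is linear."
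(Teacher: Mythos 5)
Your proof is correct and follows essentially the same route as the paper: since $\deg p\in\{2,3\}$, any nontrivial factorization has a linear factor, which you normalize to the form $T-a$ by absorbing the leading constant into the other factor, and then Lemma \ref{lemma: characterization of roots} yields the root. The only difference is cosmetic (you phrase it as a contrapositive and spell out the constant-absorption step), so there is nothing to add.
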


\begin{proof}
 As noted before, if $p$ is irreducible, it cannot have any roots. If $p$ is not irreducible, then $p\in q_1\hyperdot q_2$ for a linear polynomial $q_1$ and polynomial $q_2$ of degree $1$ or $2$. After multiplying $q_2$ with the leading coefficient $d_1$ of $q_1$, and $q_1$ by its inverse $d_1^{-1}$, we can assume that $q_1$ is of the form $T-a$. Thus we have $p\in (T-a)\hyperdot q_2$, i.e.\ $a$ is a root of $p$ by Lemma \ref{lemma: characterization of roots}.
\end{proof}

\section{Factorizations of tropical polynomials}
\label{section: tropical polynomials}

A \emph{tropical polynomial} is a polynomial over the tropical hyperfield $\T$. In this section, we use the fundamental theorem for the tropical hyperfield to establish the unique factorization of tropical polynomials into linear polynomials, and we describe a division algorithm.




\subsection{Unique factorization for tropical polynomials}
\label{subsection: unique factorization for tropical polynomials}

The fact that every polynomial function on the tropical line is piecewise linear can be expressed by saying that every polynomial function over the tropical numbers factors uniquely into linear functions. This is sometimes called the \emph{fundamental theorem of tropical algebra}. 

This result is reflected by the following variant for the tropical hyperfield, which we call the \emph{fundamental theorem for the tropical hyperfield}.

Let $p=\sum c_iT^i$ be a monic polynomial of degree $n$ over $\T$ and let $a_1,\dotsc,a_n\in\T$. Then we have 
\[
 p\in \underset{i=1}{\stackrel{n}\hyperprod}(T+a_i) \quad \text{if and only if} \quad c_{i} \in \underset{e_{i+1}<\dotsb<e_{n}}\hypersum a_{e_{i+1}}\dotsb a_{e_n} \quad \text{for all }i=0,\dotsc,n-1. 
\]
If $a_1\leq \dotsb \leq a_n$, then this is equivalent to the conditions that $c_i\leq a_{i+1}\dotsb a_n$ for all $i=0,\dotsc,n-1$, with equality holding if $a_{i}<a_{i+1}$. The following is Theorem 4.1 in \cite{Baker-Lorscheid18b}.

\begin{thm}[Fundamental theorem for the tropical hyperfield]\label{thm: fundamental theorem for the tropical hyperfield}
 Let $p=\sum_{i=0}^n c_i T^i$ be a monic polynomial of degree $n$ over $\T$. Then there is a unique sequence $a_1,\dotsc,a_n\in\T$ with $a_1\leq\dotsb\leq a_n$ such that $p\in \hyperprod(T+a_i)$, and $a\in\T$ is a root of $p$ if and only if $a\in\{a_1,\dotsc, a_n\}$.
\end{thm}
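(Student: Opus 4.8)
The theorem bundles three assertions: the \emph{existence} of a weakly increasing sequence $a_1\leq\dotsb\leq a_n$ in $\T$ with $p\in\hyperprod(T+a_i)$, its \emph{uniqueness}, and the coincidence of the root set of $p$ with $\{a_1,\dotsc,a_n\}$. My plan is to deduce all three from the coefficientwise criterion recorded just before the statement: for $a_1\leq\dotsb\leq a_n$ (with the convention $a_0=0$) one has $p\in\hyperprod(T+a_i)$ if and only if $c_n=1$ and $c_i\leq a_{i+1}\dotsb a_n$ for all $i=0,\dotsc,n-1$, with equality whenever $a_i<a_{i+1}$.

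First I would prove this criterion by induction on $n$. Because $a_n$ is the largest of the $a_i$, passing from $\hyperprod_{i=1}^{n-1}(T+a_i)$ to $\hyperprod_{i=1}^{n}(T+a_i)$ amounts to multiplying each $r=\sum b_jT^j$ in the former set by $T+a_n$, which replaces the degree-$i$ coefficient by some element of $b_{i-1}\hyperplus a_nb_i$; by the definition of $\hyperplus$ in $\T$ this set is $\{\max(b_{i-1},a_nb_i)\}$ if the maximum is attained once and $[0,\max(b_{i-1},a_nb_i)]$ if it is attained twice. Substituting the inductive description of the admissible $b_j$ and running the resulting finite case analysis yields the inclusion ``$\subseteq$''. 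The reverse inclusion is the delicate point: given an admissible coefficient vector $(c_i)$ for $(a_1,\dotsc,a_n)$ one must exhibit an actual $r=\sum b_jT^j$ in $\hyperprod_{i=1}^{n-1}(T+a_i)$ with $c_i\in b_{i-1}\hyperplus a_nb_i$ for every $i$, and here the consecutive relations share the variable $b_i$, so the $b_j$ cannot be chosen one at a time. I would construct them by an explicit greedy recursion --- descending from $b_{n-1}=1$ and ascending from $b_0=a_n^{-1}c_0$ and matching up in the ``fuzzy'' middle range of $p$ --- which is exactly the shape of the division algorithm of Theorem~\ref{thm: division algorithm for tropical polynomials}; checking that these $b_j$ satisfy both the defining relations and the inductive inequalities is routine but somewhat tedious bookkeeping, and this is the step I expect to be the main obstacle.

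It is worth noting that existence also has a quick self-contained half: a monic tropical polynomial of degree $\geq1$ always has a root, since for $p\neq T^n$ the value $a=\max\{c_i^{1/(n-i)}\mid i<n,\ c_i\neq 0\}$ makes $\max_{0\leq i\leq n}c_ia^i=a^n$ occur both at $i=n$ and at the maximizing index $i<n$, whence $0\in\hypersum_i c_ia^i$, while $0$ is a root of $T^n$. By Lemma~\ref{lemma: characterization of roots}, together with $-a=a$ in $\T$ (so $T-a=T+a$), this gives $p\in(T+a)\hyperdot q$ for a monic $q$ of degree $n-1$; inducting on $q$ and re-sorting the linear factors, which is controlled by the coefficient estimates above, produces the sorted factorization. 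Uniqueness then reduces to elementary convex geometry: after splitting off the power $T^m$ with $m=\min\{i\mid c_i\neq 0\}$ (which forces $a_1=\dotsb=a_m=0$) we may assume $c_0\neq 0$, and then the numbers $\gamma_i:=a_{i+1}\dotsb a_n>0$ are pinned down by $\gamma_n=1$, $\gamma_0=c_0$, $\gamma_i\geq c_i$ with equality at the strict-increase indices, and --- since $a_i=\gamma_{i-1}/\gamma_i$ is increasing --- by concavity of $i\mapsto\log\gamma_i$; these conditions say precisely that $\log\gamma$ is the upper concave hull of the points $(i,\log c_i)$, which exists and is unique, and $a_i=\gamma_{i-1}/\gamma_i$ are its edge slopes counted with multiplicity.

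Finally, the root characterization I would read straight off the coefficient inequalities, by determining when $M:=\max_{0\leq i\leq n}c_ia^i$ is attained at least twice (for $a\neq 0$; the case $a=0$ is handled by the split-off $T^m$). If $a=a_l=\dotsb=a_r$ is one of the values, the equalities forced at the indices $l-1$ and $r$ give $c_{l-1}a^{l-1}=c_ra^r=M$ while every other $c_ia^i\leq M$, so $M$ is doubled and $a$ is a root. If $a$ lies strictly between consecutive values $a_k<a<a_{k+1}$, or below $a_1$, or above $a_n$, a one-line monotonicity estimate shows that $c_ka^k$, respectively $c_0$, respectively $a^n$, strictly dominates every other $c_ia^i$, so $M$ is attained only once and $a$ is not a root. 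Combined with Lemma~\ref{lemma: characterization of roots}, this is exactly the assertion that the roots of $p$ are $a_1,\dotsc,a_n$. (In the paper proper one may of course simply invoke \cite[Theorem~4.1]{Baker-Lorscheid18b}.)
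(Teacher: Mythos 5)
The paper does not prove this statement at all: it is imported verbatim from \cite{Baker-Lorscheid18b} (Theorem 4.1), so there is no internal proof to compare with, and your closing parenthetical is exactly what the authors do. Your self-contained argument is correct and, in substance, follows the Baker--Lorscheid route: reduce everything to the coefficientwise criterion $c_i\leq a_{i+1}\dotsb a_n$ with equality when $a_i<a_{i+1}$, and then observe that, after splitting off $T^m$, these conditions characterize $\log\gamma_i$, where $\gamma_i=a_{i+1}\dotsb a_n$, as the least concave majorant of the points $(i,\log c_i)$ --- precisely the Newton-polygon description the paper alludes to right after the theorem. What you add beyond the citation is an inductive proof of the criterion itself, and your greedy construction for the reverse inclusion does go through: splitting off the largest value $a_n$ and taking $b_0=a_n^{-1}c_0$, $b_j=\max\{a_n^{-1}c_j,\,a_n^{-1}b_{j-1}\}$ below the top block and $b_j=a_{j+1}\dotsb a_{n-1}$ on it, one checks $b_j\leq a_{j+1}\dotsb a_{n-1}$ with equality whenever $a_j<a_{j+1}$, so the inductive hypothesis applies to the quotient and the relations $c_i\in b_{i-1}\hyperplus a_nb_i$ hold; this is the same bookkeeping that appears later in the proof of Theorem \ref{thm: division algorithm for tropical polynomials}. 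Your root characterization (the maximum of the $c_ia^i$ is attained at least twice exactly when $a\in\{a_1,\dotsc,a_n\}$) is also correct. The one place to be careful is the alternative existence argument: splitting off an arbitrary root and then ``re-sorting the linear factors'' is not automatic, since $n$-fold hyperproducts in $\Poly(\T)$ depend on the order of the factors (the paper cites \cite{Liu19} for the failure of associativity); as your main line (criterion plus concave hull) already yields both existence and uniqueness, either drop that detour or route the re-sorting explicitly through the coefficient criterion, as you hint.
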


In addition, \cite[Thm.\ 4.1]{Baker-Lorscheid18b} provides an effective way to compute the tropical numbers $a_1,\dotsc,a_n$: they correspond to the slopes of the linear segments of the Newton polygon of $p$; cf.\  section \ref{subsection: an example} for an example. This allows us to formulate a division algorithm for tropical polynomials in section \ref{subsection: division algorithm for tropical polynomials}.

A direct consequence of Theorem \ref{thm: fundamental theorem for the tropical hyperfield} is the unique factorization of tropical polynomials.

\begin{thm}\label{thm: unique factorization of tropical polynomials}
 The irreducible tropical polynomials are precisely the linear tropical polynomials, and $\Poly(\T)$ has the unique factorization property.
\end{thm}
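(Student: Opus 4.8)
The plan is to deduce everything from the Fundamental Theorem for the tropical hyperfield (Theorem \ref{thm: fundamental theorem for the tropical hyperfield}). The first step is to establish that the irreducible tropical polynomials are exactly the linear ones. One direction is already recorded in the text: every linear polynomial $c_1T+c_0$ with $c_1\neq 0$ is irreducible. For the converse, suppose $p$ is a tropical polynomial of degree $n\geq 2$; after dividing by its leading coefficient we may assume $p$ is monic, and this does not affect irreducibility since $p\sim q$ implies $p$ irreducible iff $q$ irreducible. By Theorem \ref{thm: fundamental theorem for the tropical hyperfield} there exist $a_1\leq\dotsb\leq a_n$ in $\T$ with $p\in\hyperprod_{i=1}^n(T+a_i)$. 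Writing $q_1=T+a_1$ and $q_2\in\hyperprod_{i=2}^n(T+a_i)$ with $p\in q_1\hyperdot q_2$ (using the definition of the $n$-fold product to peel off the first factor, or rather the last — one must be a little careful about the order, but commutativity of two-fold $\hyperdot$ lets us arrange $q_1=T+a_1$), we have $\deg q_1=1$ and $\deg q_2=n-1\geq 1$, so neither is a constant polynomial, hence $p\not\sim q_1$ and $p\not\sim q_2$. Therefore $p$ is not irreducible. This shows the irreducible tropical polynomials are precisely the linear ones.

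The second step is the unique factorization property. Let $p$ be a tropical polynomial; again assume $p$ monic. Suppose $p\in q_1\hyperdot\dotsb\hyperdot q_n$ and $p\in q'_1\hyperdot\dotsb\hyperdot q'_m$ are two factorizations into irreducibles. By the first step each $q_i$ and each $q'_j$ is linear, so $n=\deg p=m$ by additivity of degree. After rescaling each factor by a constant (which only multiplies $p$ by a constant, and since $p$ is monic the constants must multiply to $1$), we may write $q_i=b_i(T+a_i)$ and $q'_j=b'_j(T+a'_j)$ with $\prod b_i=\prod b'_j=1$, and it suffices to show that the multisets $\{a_1,\dotsc,a_n\}$ and $\{a'_1,\dotsc,a'_n\}$ coincide. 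Reordering so that $a_1\leq\dotsb\leq a_n$ and $a'_1\leq\dotsb\leq a'_n$, the uniqueness clause of Theorem \ref{thm: fundamental theorem for the tropical hyperfield} gives $a_i=a'_i$ for all $i$ — but here one must observe that the $n$-fold product $\hyperprod(T+a_i)$ may depend on the order of the factors, so strictly one needs: if $p\in\hyperprod_{i=1}^n(T+a_{\pi(i)})$ for some permutation $\pi$, then reordering the factors into nondecreasing order does not remove $p$ from the product. This follows from the "if and only if" characterization in the statement preceding Theorem \ref{thm: fundamental theorem for the tropical hyperfield}, which describes membership $p\in\hyperprod(T+a_i)$ purely in terms of the symmetric conditions $c_i\in\hypersum_{e_{i+1}<\dotsb<e_n}a_{e_{i+1}}\dotsb a_{e_n}$, manifestly independent of the order of the $a_i$. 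Hence the multiset of $a_i$'s is uniquely determined by $p$, giving $q_i\sim q'_{\sigma(i)}$ for a suitable permutation $\sigma$.

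The main obstacle I anticipate is bookkeeping around the non-associativity and order-dependence of $\hyperprod$: the definition peels factors off from the right, so to extract a linear factor $T+a_1$ and to compare two unordered factorizations one has to invoke the order-independent membership criterion from the Fundamental Theorem rather than manipulate $\hyperdot$ directly. Once that criterion is used, both the classification of irreducibles and the uniqueness statement reduce to the uniqueness of the sequence $a_1\leq\dotsb\leq a_n$ already supplied by Theorem \ref{thm: fundamental theorem for the tropical hyperfield}, and the argument is short. The existence half of "unique factorization" (that $p$ does factor into irreducibles at all) is immediate: it is exactly the existence clause of Theorem \ref{thm: fundamental theorem for the tropical hyperfield} together with dividing off the leading coefficient.
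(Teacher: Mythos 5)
Your proposal is correct and follows essentially the same route as the paper, which presents Theorem \ref{thm: unique factorization of tropical polynomials} as a direct consequence of the fundamental theorem for the tropical hyperfield (Theorem \ref{thm: fundamental theorem for the tropical hyperfield}); you simply spell out the details the paper leaves implicit, including the correct use of the order-independent membership criterion $c_i\in\hypersum_{e_{i+1}<\dotsb<e_n}a_{e_{i+1}}\dotsb a_{e_n}$ to handle the order-dependence of $\hyperprod$ and the reduction to the monic case. (The paper's remark offers only an alternative argument, via lifting along a surjective morphism from an algebraically closed valued field, for the statement that irreducibles are linear; your argument for that part via peeling off a linear factor is the straightforward one and is fine.)
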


\begin{rem}
 Using the methods of this text, we find the following short argument to prove the fact that every irreducible tropical polynomial is linear. Namely, consider a surjective morphism $v:K\to \T$ from an algebraically closed field $K$ to $\T$. For example, we could take the (exponential) valuation $v_p:\C\{T\}\to \T$ of the field of Puiseux series $K=\C\{T\}$ over $\C$ with real exponents.
 
 For an irreducible tropical polynomial $p=\sum c_iT^i$, we choose elements $\hat c_i\in K$ with $v(\hat c_i)=c_i$. By Lemma \ref{lemma: factorizations are preserved under morphisms}, the polynomial $\hat p=\sum\hat c_iT^i$ is irreducible over $K$. Since $K$ is algebraically closed, $\hat p$ is linear, and so is $p=v(\hat p)$.
\end{rem}


\subsection{A division algorithm for tropical polynomials}
\label{subsection: division algorithm for tropical polynomials}

While it is a direct calculation to verify whether $0\in p(a)$ for an element $a$ of $\T$ and a tropical polynomial $p$, it it is not so clear how to find a $q\in\Poly(F)$ that satisfies $p\in (T-a)\hyperdot q$, which exists by Lemma \ref{lemma: characterization of roots}. In the case of a field $K$, this can be done using the usual division algorithm for polynomials over $K$. For the tropical hyperfield, there is a similar, but slightly more involved, algorithm, which we describe in the following.


Let $p=\sum c_iT^i$ be a polynomial of degree $n$ over $\T$. By Theorem \ref{thm: fundamental theorem for the tropical hyperfield}, there is a unique sequence $a_1\leq \dotsb\leq a_n$ of tropical numbers such that $c_n^{-1}p\in\hyperprod(T+a_i)$. Since the roots of $c_n^{-1}p$ are the same as the roots of $p$, we conclude that the roots of $p$ are $a_1,\dotsc,a_n$, counted with multiplicities. Fix a root $a\in\{a_1,\dotsc,a_n\}$ of multiplicity $m$, i.e.\ 
\[
 a \ = \ a_k \ = \ \dotsc \ = \ a_{k+m-1}
\]
for some $k\in\{1\dotsc,n-m+1\}$ and $a_{k-1}<a_k$ if $k\geq2$ as well as $a_{k+m-1}<a_{k+m}$ if $k\leq n-m$. If $a=0$, then $c_0=0$ and $q=\sum_{i=0}^{n-1} c_{i+1} T^i$ is the unique polynomial such that $p\in (T-0)\hyperdot q$.

Thus let us assume from here on that $a$ is not zero. We can determine a polynomial $q=\sum d_iT^i$ of degree $n-1$ with $p\in(T+a)\hyperdot q$ by the following recursive definition.
\begin{enumerate}
 \item\label{algo1} If $k\leq n-m$, then let $d_{n-1}=c_n$. For $i=n-2,\dotsc,k+m-1$, we define (in decreasing order)
       \[
        d_i \ = \ \max\{c_{i+1},\, ad_{i+1}\}.
       \]
 \item\label{algo2} If $k\geq2$, then let $d_0=a^{-1}c_0$. For $i=1,\dotsc,k-2$, we define (in increasing order)
       \[
        d_i \ = \ \max\{a^{-1}c_i,\, a^{-1}d_{i-1}\}.
       \]
 \item\label{algo3} For $i=k-1,\dotsc,k+m-2$, we define
       \[
        d_i \ = \ a_{i+2}\dotsb a_{n}c_n.
       \]
\end{enumerate}

\begin{thm}\label{thm: division algorithm for tropical polynomials}
 If $a\neq 0$ is a root of $p$, then the polynomial $q=\sum d_iT^i$ as defined above satisfies $p\in(T+a)\hyperdot q$, i.e.\
 \[
  c_n \ = \ d_{n-1}, \qquad c_0 \ = \ ad_0 \qquad \text{and} \qquad c_i \in (ad_i)\hyperplus d_{i-1} \qquad \text{for} \qquad i=1,\dotsc,n-1.
 \]
\end{thm}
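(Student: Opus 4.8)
The plan is to verify directly that the polynomial $q=\sum d_iT^i$ produced by the three-part recursion satisfies the coefficient conditions $c_n=d_{n-1}$, $c_0=ad_0$, and $c_i\in(ad_i)\hyperplus d_{i-1}$ for $1\le i\le n-1$, which by the remark following Lemma \ref{lemma: characterization of roots} is exactly the statement $p\in(T+a)\hyperdot q$ (recall $-a=a$ in $\T$). The natural strategy is to split the index range into the four regimes governed by the recursion: the \emph{top} range $i\ge k+m$ handled by step \eqref{algo1}, the \emph{bottom} range $i\le k-1$ handled by step \eqref{algo2}, the \emph{middle} range $k-1\le i\le k+m-2$ handled by step \eqref{algo3}, and the two boundary relations at $i=n$ and $i=0$. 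Throughout I will use the fundamental theorem (Theorem \ref{thm: fundamental theorem for the tropical hyperfield}) in its inequality form: writing $s_j=a_{j+1}\dotsb a_n c_n$, we have $c_j\le s_j$ for all $j$, with equality whenever $a_j<a_{j+1}$; in particular $c_n=s_n$, $c_{k+m-1}=\dotsb$ is bounded by $s_{k+m-1}=\dotsb=s_{k-1}$ (all equal since $a_k=\dotsc=a_{k+m-1}=a$), and $c_{k-1}=s_{k-1}$, $c_{k+m-1}=s_{k+m-1}$ hold with equality by the boundary conditions $a_{k-1}<a_k$ and $a_{k+m-1}<a_{k+m}$.

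For the top range, I expect to prove by downward induction that $d_i=\max\{c_{i+1},ad_{i+1}\}\le s_{i+1}$ with equality when $a_{i+1}<a_{i+2}$ — here one uses that $q$ should morally be $c_n\prod_{j\ne\text{one copy of }a}(T+a_j)$, whose coefficients are the $s_{i+1}/a = a_{i+2}\dotsb a_n c_n$ type expressions, and that $ad_{i+1}\le a\cdot s_{i+2}=s_{i+1}$. Then checking $c_i\in(ad_i)\hyperplus d_{i-1}$ amounts to the tropical rule \say{the max appears at least twice}: since $d_i\ge c_{i+1}$ forces $ad_i\ge ac_{i+1}$ is not quite what I want — rather, I will show that among the three numbers $c_i$, $ad_i$, $d_{i-1}$ the maximum is attained twice, using the defining formula $d_{i-1}=\max\{c_i,ad_i\}$ directly: if $d_{i-1}=c_i$ then $c_i\ge ad_i$ and $d_{i-1}=c_i$, so the max is $c_i=d_{i-1}$ appearing twice; if $d_{i-1}=ad_i>c_i$ then the max is $ad_i=d_{i-1}$, again twice; hence $c_i\in(ad_i)\hyperplus d_{i-1}$. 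This handles step \eqref{algo1} including the junction with step \eqref{algo3} at $i=k+m-1$. The bottom range is symmetric: from $d_i=\max\{a^{-1}c_i,a^{-1}d_{i-1}\}$ one gets $ad_i=\max\{c_i,d_{i-1}\}\ge d_{i-1}$ and $\ge c_i$ with the max attained twice (either $ad_i=c_i$ or $ad_i=d_{i-1}$), giving $c_i\in(ad_i)\hyperplus d_{i-1}$; the boundary case $i=0$ gives $c_0=ad_0$ by definition of $d_0=a^{-1}c_0$.

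The middle range is where the two halves of the recursion must be glued, and I expect this to be the main obstacle. Here $d_i=a_{i+2}\dotsb a_n c_n=s_{i+1}$ for $k-1\le i\le k+m-2$, so that $ad_i=a\cdot s_{i+1}=s_i$ (using $a_{i+1}=a$ in this range). Then for $k\le i\le k+m-2$ one needs $c_i\in(ad_i)\hyperplus d_{i-1}=s_i\hyperplus s_i=[0,s_i]$, which holds because $c_i\le s_i$ by the fundamental theorem. At the two ends of the middle block one must check compatibility with the neighboring regimes: at $i=k+m-1$, the relation $c_{k+m-1}\in(ad_{k+m-1})\hyperplus d_{k+m-2}$ with $d_{k+m-1}$ coming from step \eqref{algo1} (or $=c_n$ if $k=n-m+1$... actually if $k+m-1=n-1$ then $d_{n-1}=c_n$) and $d_{k+m-2}=s_{k+m-1}$ from step \eqref{algo3}: one shows $ad_{k+m-1}\le s_{k+m-1}$ and $d_{k+m-2}=s_{k+m-1}$, and since $a_{k+m-1}<a_{k+m}$ gives $c_{k+m-1}=s_{k+m-1}$, the max $s_{k+m-1}$ appears at least twice (as $c_{k+m-1}$ and as $d_{k+m-2}$). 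Symmetrically, at $i=k-1$: $d_{k-1}=s_k$ from step \eqref{algo3} and $d_{k-2}=a^{-1}c_{k-1}\cdot(\text{stuff})$ from step \eqref{algo2}; one needs $c_{k-1}\in(ad_{k-1})\hyperplus d_{k-2}=s_{k-1}\hyperplus d_{k-2}$, and since $a_{k-1}<a_k$ forces $c_{k-1}=s_{k-1}$, it suffices to know $d_{k-2}\le s_{k-1}$, i.e.\ that step \eqref{algo2}'s output stays below the Newton-polygon bound — which I will establish by upward induction on step \eqref{algo2} in parallel with the induction already needed there. The degenerate cases ($k=1$, so step \eqref{algo2} is empty; $k=n-m+1$, so step \eqref{algo1} is empty; $m=n$) should each be checked to make sure the boundary relations $c_0=ad_0$ and $c_n=d_{n-1}$ still come out, but these reduce to direct substitution into the definitions.
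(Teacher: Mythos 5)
Your proposal is correct and follows essentially the same route as the paper's proof: direct verification of the coefficient relations, with the top and bottom ranges immediate from the max-definitions in steps \eqref{algo1} and \eqref{algo2}, the middle block handled via $ad_i=d_{i-1}=a_{i+1}\dotsb a_nc_n\geq c_i$, and the two junctions $i=k-1$ and $i=k+m-1$ settled by exactly the two inductive bounds the paper proves ($d_j\leq a_{j+2}\dotsb a_nc_n$ upward for step \eqref{algo2} and $ad_j\leq a_{j+1}\dotsb a_nc_n$ downward for step \eqref{algo1}). The only loose ends are the degenerate cases you defer to ``direct substitution'' (e.g.\ $k=1$, where $c_0=ad_0$ needs the equality $c_0=a_1\dotsb a_nc_n$ from the fundamental theorem), which the paper handles in the same brief way.
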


\begin{rem}\label{rem: division algorithm}
 The recursion in step \eqref{algo1} stays in a direct analogy to the division algorithm for polynomials over a field, which is given by the formulas $d_{n-1}=c_n$ and $d_i=c_{i+1}+ad_{i+1}$ where $i$ decreases from $n-2$ to $0$. In the tropical setting, step \eqref{algo1} of the algorithm fails in general to provide the required result if used to define all coefficients of $q$, cf.\ section \ref{subsection: an example}. To achieve $p\in (T+a)\hyperdot q$, one needs to define the coefficients $d_i$ for smaller $i$ in terms of step \eqref{algo2}.
 
 In contrast, the coefficients $d_i$ occurring in step \eqref{algo3} could also be defined by the recursions \eqref{algo1} or \eqref{algo2}---all three definitions yield the same result in this case. We chose for the definition as it is because it is explicit and therefore useful for calculation, and it is this form that we use in the proof of Theorem \ref{thm: division algorithm for tropical polynomials}.
 
 Another facility in calculating the coefficients of $q$ is that whenever $a_{i}<a_{i+1}$, then $c_{i}=a_{i+1}\dotsb a_nc_n$. In the course of the proof of Theorem \ref{thm: division algorithm for tropical polynomials}, we show that $ad_{i}\leq a_{i+1}\dotsb a_nc_n$ for $k+m-1\leq i \leq n-1$ and $d_i\leq a_{i+2}\dotsb a_nc_n$ for $0\leq i \leq k-2$. Thus we have $d_i=c_{i+1}$ if $k+m-1\leq i \leq n-2$ and $a_{i+1}< a_{i+2}$, and we have $d_i=a^{-1}c_i$ if $1\leq i \leq k-2$ and $a_i<a_{i+1}$. This means that the recursive definitions in steps \eqref{algo1} and \eqref{algo2} are only needed if multiple zeros other than $a$ occur. In particular, $q$ can be defined explicitly and is unique if all zeros of $p$ are simple.
 
 If multiple zeros occur, then $q=\sum d_iT^i$ is in general not the unique divisor of $p$ by $T+a$, but it is maximal among all such divisors in the following sense: if $p\in(T+a)\hyperdot q'$ for another polynomial $q'=\sum d_i'T^i$, then $d_i'\leq d_i$. This additional statement is easily derived from the proof of Theorem \ref{thm: division algorithm for tropical polynomials}, but we will forgo to spell out the details.
\end{rem}


\subsection{An example}
\label{subsection: an example}


 As an illustration of the division algorithm and of some observations from Remark \ref{rem: division algorithm}, we consider the tropical polynomial $p=T^3+rT^2+T+r$ and $a=r$ where $r>1$. The roots of $p$ can be determined from the Newton polygon of $p$, which is the maximal convex function $\rho:[0,3]\to\R$ with $\rho(i)\leq -\log c_i$ for $i=0,\dotsc,3$. Its graph looks as follows:
 \[
  \begin{tikzpicture}[inner sep=0,x=80pt,y=40pt,font=\footnotesize]
   \filldraw (0,0) circle (1pt);
   \filldraw (2,0) circle (1pt);
   \filldraw (0,-1) circle (2pt);
   \filldraw (1,0) circle (2pt);
   \filldraw (2,-1) circle (2pt);
   \filldraw (3,0) circle (2pt);
   \node at (-0.15,0) {$0$};
   \node at (-0.3,-1) {$-\log r$};
   \node at (1.0,-0.2) {$1$};
   \node at (2.0,-0.2) {$2$};
   \node at (3.0,-0.2) {$3$};
   \node at (0.35,-1.25) {$(0,-\log c_0)$};
   \node at (1,0.3) {$(1,-\log c_1)$};
   \node at (2.0,-1.25) {$(2,-\log c_2)$};
   \node at (3,0.3) {$(3,-\log c_3)$};
   \draw[-, thick] (0,-1) -- (2,-1) -- (3,0);
   \draw[-] (0,0) -- (3,0) node[right=10pt] {\normalsize $i$};
   \draw[->] (0,-1.5) -- (0,0.5) node[left=5pt] {\normalsize $\rho(i)$};
  \end{tikzpicture}
 \]
 The roots of $p$ can be calculated from $\rho$ by the formula $a_i=\exp\big(\rho(i)-\rho(i-1)\big)$; cf.\ \cite[Thm.\ 4.1]{Baker-Lorscheid18b} for details. This yields the roots $a_1=a_2=1$ and $a_3=r$ of $p$. We encourage the reader to convince herself or himself that indeed $p\in(T+1)\hyperdot(T+1)\hyperdot (T+r)$.

 Thus we see that $a=r=a_3$ is a root of $p$ of multiplicity $1$. We are prepared to execute the algorithm to determine $q=\sum d_iT^i$. In our example, we have $k=3$ and $m=1$. Thus only steps \eqref{algo2} (for $i=0,1$) and \eqref{algo3} (for $i=2$) of the algorithm apply to determine the coefficients $d_i$ of $q$. We calculate for increasing $i=0,\dotsc,2$:
 \[
  d_0 = a^{-1}c_0 = 1, \quad d_1 = \max\{a^{-1}c_1,\, a^{-1}d_0\} = r^{-1}, \quad d_2 = c_3 = 1.
 \]
 Thus we find that $q=T^2+r^{-1}T+1$ is a divisor of $p$ by $T+r$. Once again, we encourage the reader to verify that indeed $p\in(T+r)\hyperdot q$.
 
 In order to exhibit some of the earlier mentioned effects that occur in the tropical setting and differ from the situation of polynomials over a field, we analyse this example in more detail. To begin with, we determine all polynomials $q'=\sum d_i'T^i$ that satisfy $p\in (T+a)\hyperdot q'$, i.e.\
 \[
  c_3 \ = \ d'_2, \quad c_2 \ \in \ ad'_2 \hyperplus d'_1, \quad c_1 \ \in \ ad'_1 \hyperplus d'_0, \quad c_0 \ = \ ad'_0.
 \]
 The first and last condition imply that $d'_2=c_3=1$ and $d'_0=a^{-1}c_0=1$, respectively. Using reversibility \ref{HG6}, the two middle conditions can be rewritten as 
 \[
  d'_1 \ \in \ (ad'_2)\hyperplus c_2 \ = \ [0,r] \quad\text{and}\quad  d'_1 \ \in \ (a^{-1}d'_0)\hyperplus (a^{-1}c_1) \ = \ [0,r^{-1}],
 \]
 which are simultaneous satisfied if and only if $d'_1\in [0,r^{-1}]$. Thus the divisors of $p$ by $T+a$ are precisely the polynomials of the form $q_s=T^2+sT+1$ with $s\in [0,r^{-1}]$.
 
 This shows that there are several divisors $q_s$ of $p$ by $T+r$. Note that $q=q_{r^{-1}}$ is maximal among all divisors. It also shows that the naive attempt to find a divisor $\tilde q=\sum \tilde d_iT^i$ in terms of elementary symmetric polynomials $\tilde d_i=\sigma_{2-i}(a_1,a_2)$, i.e.\ 
 \[
  \tilde q \ = \ T^2 + \max\{a_1,a_2\}T + a_1a_2 \ = \ T^2+T+1
 \]
 fails to provide a divisor of $p$ by $T+r$, in contrast to the situation over a field. 
 
 This example also shows that we cannot replace step \eqref{algo2} of the division algorithm neither by \eqref{algo1} nor solemnly by \eqref{algo3}. To wit, step \eqref{algo1} produces the coefficients
 \[
  d_2=c_3=1, \quad d_1=\max\{c_2,ad_2\}=r, \quad d_0=\max\{c_1,ad_1\}=r^2,
 \]
 and step \eqref{algo3} produces the coefficients
 \[
  d_2=c_3=1, \quad d_1=a_3c_3=r, \quad d_0=a_2a_3c_3=r,
 \]
 which both fail to provide a divisor $q=\sum d_iT^i$ of $p$ by $T+ar$.
 
 An example where step \eqref{algo2} fails to provide a divisor of $p$ by $T+a$ is the polynomial $p=T^3+rT^2+T+r$ with $r\in(0,1)$ and the root $a=r$. To wit, the roots of $p$ are $a_1=r$ and $a_2=a_3=1$. We have $p\in(T+r)\hyperdot q$ if and only if $q=T^2+sT+1$ with $s\in[0,r]$. But the steps in \eqref{algo2} produce the polynomial $r^{-2}T^2+r^{-1}T+1$, which is not a divisor of $p$ by $T+r$.


\subsection{The proof of Theorem \ref{thm: division algorithm for tropical polynomials}}
\label{section: proof of Theorem A}

Let $a,a_1,\dotsc,a_n\in\T$ and the polynomials $p=\sum c_iT^i$ and $q=\sum d_iT^i$ be as in Theorem \ref{thm: division algorithm for tropical polynomials}, i.e.\ $c_n^{-1}p\in\hyperprod(T+a_i)$ is the unique factorization into linear terms with $a_1\leq\dotsb\leq a_n$, the nonzero element $a$ is a root of $p$ and the $d_i$ are defined by the algorithmic steps \eqref{algo1}--\eqref{algo3}. In this section, we prove Theorem \ref{thm: division algorithm for tropical polynomials}, i.e.\ 
 \[
  c_n \ = \ d_{n-1}, \qquad c_0 \ = \ ad_0 \qquad \text{and} \qquad c_i \in (ad_i)\hyperplus d_{i-1} \qquad \text{for} \qquad i=1,\dotsc,n-1.
 \]

If $k\leq n-m$, then $c_n=d_{n-1}$ follows immediately from the definition in step \eqref{algo1}. If $k=n-m+1$, then $d_{i-1}$

The relation $c_n=d_{n-1}$ follows immediately from the definition in step \eqref{algo1} if $k\leq n-m$ and the definition in step \eqref{algo1} if $k= n-m+1$. The relation $c_0=ad_0$ follows immediately from the definition in step \eqref{algo2} if $k\geq 2$. If $k=1$, then $a=a_1$ and according to the definition in step \eqref{algo3},
\[
 ad_0 \ = \ aa_2\dotsb a_nc_n \ = \ a_1\dotsb a_nc_n \ = \ c_0,
\]
as desired. 

Since $c_i \in (ad_i)\hyperplus d_{i-1}$ if and only if the minimum among $c_i$, $ad_{i}$ and $d_{i-1}$ occurs twice, the relation $c_i \in (ad_i)\hyperplus d_{i-1}$ is satisfied for $i=k+m,\dotsc,n-1$ and $i=1,\dotsc,k-2$ by the very definition of $d_{i-1}$ in \eqref{algo1} and $d_i$ in \eqref{algo2}, respectively. 

Since $a=a_{i+1}$ for $i=k-1,\dotsc,k+m-2$, we have that
\[
 ad_i \ = \ aa_{i+2}\dotsb a_{n} \ = \ d_{i-1}
\]
for $i=k,\dotsc,k+m-2$, and thus $ad_i\hyperplus d_{i-1}=[0,d_{i-1}]$. The relation $p\in\hyperprod(T+a_i)$ means that
\[
 c_i \ \leq \ a_{i+1}\dotsb a_{n}c_n \ = \ d_{i-1},
\]
and thus $c_i\in ad_i\hyperplus d_{i-1}$ for $i=k,\dotsc,k+m-2$, as desired.
 
We are left with $i=k-1$ and $i=k+m-1$, which are the critical cases that exhibit the compatibility between the different steps in the division algorithm. 
 
We begin with the case $i=k-1$. Since $a_{k-1}<a_{k}$, we have $c_{k-1}=a_k\dotsb a_nc_n$. By the definition in step \eqref{algo3} and since $a=a_{k}$, we have 
\[
 ad_{k-1} \ = \ a_{k}a_{k+1}\dotsb a_{n}c_n \ = \ c_{k-1}.
\]
If we can show that $d_{k-2}\leq a_k\dotsb a_nc_n$, then we obtain $c_{k-1}\in ad_{k-1}\hyperplus d_{k-2}$ as desired.
 
We claim that $d_j\leq a_{j+2}\dotsb a_nc_n$ for $j=0,\dotsc,k-2$, which we will prove by induction on $j$. The case $j=k-2$ is the missing inequality to conclude the proof of the case $i=k-1$. For $j=0$, we have indeed that $d_0=a^{-1}c_0\leq a_2\dotsb a_nc_n$ since $a_1\leq a$. For $j=1,\dotsc k-2$, we have $a_{j+1}\leq a_k=a$ and thus $a^{-1}a_{j+1}\dotsb a_nc_n\leq a_{j+2}\dotsb a_nc_n$. Since $c_j\leq a_{j+1}\dotsb a_nc_n$ by our assumptions and $d_{j-1}\leq a_{j+1}\dotsb a_nc_n$ by the inductive hypothesis, we get
\[
 d_j \ = \ \max\{a^{-1}c_j,\, a^{-1}d_{j-1}\} \ \leq \ a^{-1} a_{j+1}\dotsb a_nc_n \leq a_{j+2}\dotsb a_nc_n,
\]
which verifies our claim and concludes the proof of the case $i=k$.

We turn to the case $i=k+m-1$. By the definition in step \eqref{algo3}, $d_{k+m-2}=a_{k+m}\dotsb a_nc_n$. Since $a_{k+m-1}<a_{k+m}$, we have $c_{k+m-1}=a_{k+m}\dotsb a_nc_n=d_{k+m-2}$. If we can show that $ad_{k+m-1}\leq a_{k+m}\dotsb a_nc_n$, then we obtain the desired relation $c_{k+m-1}\in ad_{k+m-1}\hypersum d_{k+m-2}$.
 
We claim that $ad_{j}\leq a_{j+1}\dotsc a_nc_n$ for $j=n-1,\dotsc,k+m-1$, which we will prove by induction on $j$ (in decreasing order). The case $j=k+m-1$ is the missing inequality to conclude the proof of the case $i=k+m-1$. For $j=n-1$, we have $d_{n-1}=c_n$ and $a \leq a_n$. Thus $ad_{n-1}\leq a_nc_n$, as claimed. For $l=n-2,\dotsc,k+m-1$, we have $a\leq a_{j+1}$. Since $c_{j+1}\leq a_{j+2}\dotsb a_nc_n$ by our assumptions and $ad_{j+1}\leq a_{j+2}\dotsc a_nc_n$ by the inductive hypothesis, we get
\[
 ad_{j} \ = \ a\cdot \max\{c_{j+1},\,ad_{j+1}\} \ \leq \ a_{j+1} a_{j+2}\dotsb a_nc_n,
\]
as claimed. This concludes the proof of Theorem \ref{thm: division algorithm for tropical polynomials}. \qed


\section{Factorizations of sign polynomials}
\label{section: sign polynomials}

A \emph{sign polynomial} is a polynomial over the sign hyperfield $\S$. In this section, we classify all irreducible sign polynomials and show that the sign hyperfield fails to have the unique factorization property. Still it admits a division algorithm for the division of sign polynomials by linear terms, in analogy to the division algorithm for tropical polynomials.


\subsection{Classification of the irreducible polynomials}
\label{subsection: classification of the irreducible polynomials}

Since a sign polynomial $p$ is irreducible if and only if $ap$ is irreducible for any $a\in\S^\times$, we can restrict our attention to monic irreducible sign polynomials.

\begin{thm}\label{thm: classification of irreducible sign polynomials}
 The monic irreducible sign polynomials are $T$, $T-1$, $T+1$ and $T^2+1$.
\end{thm}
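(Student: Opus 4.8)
The plan is to show that every monic sign polynomial of degree at least $1$ other than $T$, $T-1$, $T+1$ and $T^2+1$ factors, and conversely that these four are genuinely irreducible. The irreducibility of the linear ones $T$, $T-1$, $T+1$ is immediate from the discussion in section \ref{subsection: irreducible polynomials}, since every linear polynomial over any hyperfield is irreducible. For $T^2+1$, I would invoke Lemma \ref{lemma: irreducibility of quadratic polynomials}: a quadratic is irreducible iff it has no root, and $T^2+1$ has no root in $\S$ because $1\hyperplus 1=\{1\}$ and $(-1)^2=1$, so $p(1)=\{1\}$, $p(-1)=\{1\}$, $p(0)=\{1\}$, none containing $0$. (The only other monic quadratics are $T^2$, $T^2+T$, $T^2-T$, $T^2+T+1$, $T^2+T-1$, $T^2-T+1$, $T^2-T-1$, $T^2-1$, and each of these has a root, hence is reducible by the same lemma — so $T^2+1$ is the unique irreducible monic quadratic.)

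The substance is the claim that \emph{every monic sign polynomial of degree $\geq 3$ is reducible}, together with the handling of the remaining reducible quadratics, which the previous paragraph already covers. For degree $\geq 2$, I would argue as follows. If $c_0=0$, then $0$ is a root, so $T\mid p$ by Lemma \ref{lemma: characterization of roots}, and $p$ is reducible. So assume $c_0\in\{\pm1\}$. If $p$ has any root $a\in\{\pm1\}$, then $T-a$ divides $p$ by Lemma \ref{lemma: characterization of roots}, and since $\deg p\geq 2$ we get a nontrivial factorization. So we may assume $p$ has no root in $\S$ at all; in particular (testing $a=1$ and $a=-1$) the partial sums $\hypersum_{i} c_i$ and $\hypersum_i (-1)^i c_i$ must both fail to contain $0$. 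Using the sign-hyperaddition rule, $\hypersum c_i a^i \ni 0$ precisely when the nonzero terms $c_ia^i$ are not all of one sign; so "no root" forces: \emph{all nonzero $c_i$ equal, say to $\varepsilon$, when evaluated at $a=1$} and \emph{all nonzero $c_i(-1)^i$ equal when evaluated at $a=-1$}. The first condition says every nonzero $c_i=\varepsilon$; the second then says $\varepsilon(-1)^i$ is constant over the $i$ with $c_i\neq 0$, i.e. all such $i$ have the same parity. Replacing $p$ by $\varepsilon p$ (associated, hence same reducibility), we reduce to the case where every $c_i\in\{0,1\}$ and the support of $p$ lies in a single residue class mod $2$. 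Since $p$ is monic of degree $n\geq 2$ with $c_0\neq 0$, the support contains $0$ and $n$, so $n$ is even, $n\geq 2$, and $p=\sum_{j=0}^{n/2} c_{2j}T^{2j}$ with $c_0=c_n=1$.

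Now I would finish by exhibiting an explicit factorization of such a $p$. The natural move is to peel off $T^2+1$: I claim $p\in (T^2+1)\hyperdot q$ for a suitable monic $q=\sum e_iT^i$ of degree $n-2$ supported in even degrees, built by the recursion $e_{n-2}=c_n=1$ and, going down, choosing $e_{i-2}$ so that $c_i\in e_{i-2}\hyperplus e_i$ (possible because in $\S$ one has $x\in y\hyperplus z$ for \emph{some} $z$ whenever $y$ is prescribed — concretely take $e_{i-2}=c_i$ when $c_i\neq 0$, and otherwise $e_{i-2}=0$, noting $1\hyperplus 1=\{1\}$ and $1\hyperplus 0=\{1\}$ keep us inside $\{0,1\}$), while the odd coefficients of the product automatically vanish since both factors are even-supported. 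One checks $c_0=e_0$ (the constant term) and $c_n=e_{n-2}$ match, and the middle relations $c_i\in e_{i-2}\hyperplus e_i$ hold by construction; this gives $p\in(T^2+1)\hyperdot q$, a nontrivial factorization since $n-2\geq 0$ and $\deg(T^2+1)=2\geq 1$. Alternatively — and perhaps cleaner to write — since $p(1)=\{1\}$ and $p$ restricted to even degrees "looks like" $1+T^2+\dots$, one can directly verify $1+T^2+T^4+\dots+T^n\in (T^2+1)\hyperdot(1+T^2+\dots+T^{n-2})$ in $\S$ and reduce the general even-supported $\{0,1\}$-polynomial to this case by a monotonicity observation. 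The \textbf{main obstacle} is precisely making this last factorization watertight: unlike over a field the quotient is not unique and the naive "division algorithm" need not terminate inside $\{0,1\}$-coefficients, so the bookkeeping showing that some valid choice of $q$ exists (equivalently, that $0\in p(a)$-type obstructions do not reappear) is where care is needed — this is exactly the kind of subtlety that Theorem \ref{thm: division algorithm for sign polynomials} is designed to control, and I would lean on (a special case of) that division algorithm, applied after the reductions above, rather than reinventing it.
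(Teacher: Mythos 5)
Your reduction to the root-free case is sound: the analysis of $p(1)$ and $p(-1)$ correctly shows that a monic sign polynomial of degree $n\geq 2$ without roots must (after the harmless normalization by $\varepsilon$) have all coefficients in $\{0,1\}$, support of constant parity, and hence $n$ even with $c_0=c_n=1$; the treatment of the linear and quadratic cases via Lemma \ref{lemma: irreducibility of quadratic polynomials} is also fine. The gap is in the final and essential step, the claim that such a $p$ lies in $(T^2+1)\hyperdot q$. This is false in general: for $p=T^4+1$ the conditions for $p\in(T^2+1)\hyperdot q$ force $e_0=c_0=1$ from the constant term but $0=c_2\in e_0\hyperplus 1$ forces $e_0=-1$, a contradiction, so $T^2+1$ does not divide $T^4+1$ at all. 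Your recursion $e_{i-2}=c_i$ only verifies the relations $c_i\in e_{i-2}\hyperplus e_i$ when $c_i\neq 0$; when $c_i=0$ it requires $e_i=0$, i.e.\ $c_{i+2}=0$, so the construction only works for the full polynomial $1+T^2+\dotsb+T^n$, and the proposed \say{monotonicity} reduction to that case does not exist. Moreover the suggested rescue via Theorem \ref{thm: division algorithm for sign polynomials} cannot apply: that algorithm divides by $T-a$ where $a$ is a \emph{root}, and by construction the polynomials at this stage have no roots. The reducibility of, say, $T^4+1$ requires a product of two factors of mixed sign, e.g.\ $T^4+1\in(T^2+T+1)\hyperdot(T^2-T+1)$, and your argument provides no mechanism producing such factorizations in general.

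For comparison, the paper closes exactly this step by a completely different (and much shorter) device: lift $p$ to a real polynomial $\hat p$ with $\sign(\hat c_i)=c_i$; by Lemma \ref{lemma: factorizations are preserved under morphisms} applied to $\sign:\R\to\S$, any factorization of $\hat p$ over $\R$ into factors of positive degree pushes forward to one of $p$, so an irreducible $p$ forces $\hat p$ irreducible over $\R$, hence of degree at most $2$ with positive constant term, leaving only $T^2+aT+1$ to be checked by the root criterion. If you want to keep your elementary route, you would need a genuine combinatorial argument producing, for every even-supported $\{0,1\}$-polynomial of degree $\geq 4$, an explicit factorization into two factors of positive degree (allowing mixed-sign quadratic factors); as it stands, that step is missing.
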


\begin{proof}
 It is clear that every linear polynomial is irreducible, cf.\ section \ref{subsection: Factorization}. Thus $T$, $T-1$ and $T+1$ are precisely the monic irreducible polynomials of degree $1$. 
 
 Given a monic irreducible sign polynomial $p=\sum c_iT^i$, let $\hat c_i\in \R$ be real numbers with $\sign(\hat c_i)=c_i$. By Lemma \ref{lemma: factorizations are preserved under morphisms}, the monic real polynomial $\hat p=\sum \hat c_iT^i$ is irreducible as well. We know that the monic irreducible real polynomials are either linear or quadratic with positive constant term $\hat c_0>0$. Thus if $p$ is not linear then it must be of the form $T^2+aT+1$ for some $a\in\S$.
 
 By Lemma \ref{lemma: irreducibility of quadratic polynomials}, a quadratic polynomial is irreducible if and only if it does not have a root. We can verify this for all polynomials of the form $T^2+aT+1$:
 while $T^2+T+1$ has $-1$ as a root and $T^2-T+1$ has $1$ as a root, $T^2+1$ is the only quadratic polynomial of this shape that does not have a root. This completes our classification of the monic irreducible sign polynomials.
\end{proof}


\subsection{The failure of unique factorization}
\label{subsection: the failure of unique factorization}

It is easy to see that the unique factorization property holds for sign polynomials of degree $\leq 2$. The following example shows that this property fails from degree $3$ on.

Consider the sign polynomial $p=T^3+T^2+T+1$. Then $-1$ is a root of $p$, i.e.\ $0 \in p(-1)$. Thus there is a polynomial $q=d_2T^2+d_1T+d_0$ such that $p\in (T+1)\hyperdot q$, which is equivalent to
\[
 d_0=1, \quad d_2=1 \quad \text{and} \quad 1 \in 1\hyperplus d_1.
\]
The equation $1\in 1\hyperplus d_1$ is is true for all $d_1\in \S$, which means that $p$ is an element of all the three hyperproducts
\[
 (T+1)\hyperdot (T^2+1), \quad (T+1)\hyperdot (T^2+T+1) \quad \text{and} \quad (T+1)\hyperdot (T^2-T+1).
\]
The factors $T^2\pm T+1$ factorize into $T^2+T+1 \in (T+1)\hyperdot (T+1)$ and $T^2-T+1 \in (T-1)\hyperdot (T-1)$, respectively. The factor $T^2+1$ is irreducible. Thus we find the three different factorizations
\[
 (T+1)\hyperdot (T^2+1), \quad (T+1)\hyperdot (T+1)\hyperdot (T+1) \quad \text{and} \quad (T+1)\hyperdot \Big((T-1)\hyperdot (T-1)\Big)
\]
of $T^3+T^2+T+1$.

In fact, this example shows that sign polynomials cannot have the unique factorization property with respect to any concept of factorization that is preserved under morphisms, in the sense of Lemma \ref{lemma: factorizations are preserved under morphisms}. Indeed, the sign map $\sign:\R\to \S$ maps both real polynomials $T^3+T^2+T+1=(T+1)(T^2+1)$ and $T^3+3T^2+3T+1=(T+1)^3$ to $p$.


\subsection{A division algorithm}
\label{subsection: division algorithm for sign polynomials}

In spite of the failure of unique factorization, there is still an algorithmic way to determine a divisor of a sign polynomial by a linear term. Such a division algorithm was already exhibited in the proof of Theorem 3.1 in \cite{Baker-Lorscheid18b} for a restricted class of sign polynomials. In the following, we describe an extension of this division algorithm that applies to all sign polynomials.

As a preliminary consideration, we observe that if $a=0$ is a root of a sign polynomial $p=\sum c_iT^i$, then $c_0=0$ and $q=\sum c_{i+1}T^i$ is the unique sign polynomial such that $p\in T\hyperdot q$. Thus it suffices to describe the division algorithm for nonzero roots $a\in\S^\times=\{\pm 1\}$ only.

\begin{thm}\label{thm: division algorithm for sign polynomials}
 Let $p=\sum c_iT^i$ be a sign polynomial of degree $n$ with root $a\in\{\pm 1\}$. Define 
 \[
  l \ = \ \min\big\{\, i\in\N\,\big|\,c_i\neq 0\,\big\} \qquad \text{and} \qquad k \ = \ \min\big\{\,i\in\N\,\big|\, c_{i+1}= -a^{i+1-l} c_l\, \big\}.
 \]
 Define recursively for $i=n-1,\dotsc,0$ (in decreasing order)
 \begin{align}
  \label{sign1} d_i \ &= \ c_{i+1}        && \text{if $c_{i+1}\neq 0$ and $i>k$;}\\
  \label{sign2} d_i \ &= \ ad_{i+1}       && \text{if $c_{i+1}=0$ and $i>k$;}\\
  \label{sign3} d_i \ &= \ -a^{i+l-1} c_l && \text{if $l\leq i\leq k$;}\\
  \label{sign4} d_i \ &= \ 0              && \text{if $0\leq i<l$.}
 \end{align}
  Then $p\in(T-a)\hyperdot q$ for $q=\sum d_iT^i$.
\end{thm}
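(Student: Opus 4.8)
The plan is to verify directly that $q=\sum d_iT^i$ is a quotient of $p$ by $T-a$. By the criterion recorded just after Lemma \ref{lemma: characterization of roots}, this means checking that $\deg q=n-1$, that $c_0=-ad_0$, that $c_n=d_{n-1}$, and that $c_i\in(-ad_i)\hyperplus d_{i-1}$ for $i=1,\dots,n-1$. The only arithmetic I shall use is that $a\in\{\pm1\}$, so $a^2=1$ and $a^j$ depends only on the parity of $j$; this is what powers the exponent identities below.

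First I would establish that $k$ is well defined and that $l\le k\le n-1$. Since $a$ is a root of the nonzero polynomial $p$, the hypersum $\hypersum c_ia^i$ contains $0$ while not all of its terms vanish, so by the addition rule of $\S$ both $1$ and $-1$ occur among the nonzero terms $c_ia^i$. The term at $i=l$ is $c_la^l\neq0$, so there is an $m>l$ with $c_m\neq0$ and $c_ma^m=-c_la^l$, which, after multiplying by $a^m$ and using parity, rewrites as $c_m=-a^{m-l}c_l$; hence the set defining $k$ is nonempty and $k\le m-1\le n-1$. On the other hand, for $i<l$ the coefficient $c_{i+1}$ is either $0$ or $c_l$, and neither equals the nonzero element $-a^{i+1-l}c_l$ (note $c_l\neq-c_l$), so $k\ge l$. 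Consequently the four clauses of the recursion partition $\{0,\dots,n-1\}$, and $q$ is well defined and of degree at most $n-1$.

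Next I would dispatch the indices $i\le k$. For $i<l$ all of $c_i$, $d_i$, $d_{i-1}$ vanish, which settles those relations and gives $c_0=-ad_0$ when $l\ge1$; when $l=0$ one computes $-ad_0=-a(-a^{-1}c_0)=c_0$. For $l<i\le k$, clause \eqref{sign3} yields $-ad_i=a^{i+l}c_l$ and $d_{i-1}=-a^{i+l-2}c_l$, which are opposite nonzero elements of $\S$, so $(-ad_i)\hyperplus d_{i-1}=\S$ and the relation is automatic; for $i=l$ (with $l\ge1$) one has instead $-ad_l=c_l$ and $d_{l-1}=0$. The crux is the transition $i=k+1$: the defining equation $c_{k+1}=-a^{k+1-l}c_l$ and clause \eqref{sign3}, which reads $d_k=-a^{k+l-1}c_l$, give $c_{k+1}=d_k$ because $k+1-l$ and $k+l-1$ have the same parity; and since $c_{k+1}$ is a nonzero element of $\S$ we have $c_{k+1}\in x\hyperplus c_{k+1}$ for every $x\in\S$, so the relation $c_{k+1}\in(-ad_{k+1})\hyperplus d_k$ holds whatever $d_{k+1}$ is.

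Finally I would treat the high range $k+2\le i\le n-1$ together with $c_n=d_{n-1}$, which behave like ordinary synthetic division. A downward induction via \eqref{sign1} and \eqref{sign2} shows $d_i\neq0$ for all $i\ge k+1$. For $k+2\le i\le n-1$: if $c_i\neq0$, then \eqref{sign1} gives $d_{i-1}=c_i$ and again $c_i\in(-ad_i)\hyperplus c_i$; if $c_i=0$, then \eqref{sign2} gives $d_{i-1}=ad_i$, so $-ad_i$ and $d_{i-1}$ are opposite nonzero signs, with hypersum $\S\ni0=c_i$. The relation $c_n=d_{n-1}$ holds by \eqref{sign1} when $k\le n-2$, and by the same parity identity as in the previous paragraph when $k=n-1$ (in which case there is no relation at index $k+1$); in both cases $d_{n-1}\neq0$, so $\deg q=n-1$. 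The only genuinely delicate points are extracting the existence and location of $k$ from the root condition and the gluing at $i=k+1$ between the explicit branch \eqref{sign3} ($i\le k$) and the synthetic-division branch \eqref{sign1}, \eqref{sign2} ($i>k$); granting the identity $c_{k+1}=d_k$, the remaining verifications are just bookkeeping against the addition table of $\S$.
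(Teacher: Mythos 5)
Your proof is correct and takes essentially the same route as the paper's: a direct case-by-case verification of the coefficient relations $c_0=-ad_0$, $c_i\in(-ad_i)\hyperplus d_{i-1}$, $c_n=d_{n-1}$, with the same case split at $i<l$, $i=l$, $l<i\leq k$, the gluing identity $c_{k+1}=d_k$ at $i=k+1$, and the synthetic-division range $i>k+1$. The only differences are cosmetic: you handle both signs $a=\pm1$ uniformly via parity of exponents instead of reducing to $a=1$ by the substitution $T\mapsto -T$, and you additionally spell out that the root condition makes $k$ well defined with $l\leq k\leq n-1$, which the paper leaves implicit.
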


\begin{proof}
 Once we have proven the theorem for the root $a=1$ of $p=\sum c_iT^i$, we can derive the division algorithm for the root $-1$ by applying the division algorithm for $a=1$ to $p(-T)=\sum (-1)^ic_iT^i$ and using that $p\in\big(T-(-1)\big)\hyperdot q$ if and only if $p(-T)\in -(T-1)\hyperdot q(-T)$. Thus the case $a=-1$ follows by a straight forward calculation from the case $a=1$. 
 
 We proceed with the proof for $a=1$. Recall that $p\in(T-1)\hyperdot q$ if and only if 
 \[
  c_n \ = \ d_{n-1}, \qquad c_0 \ = \ -d_0 \qquad \text{and} \qquad c_i \in (-d_i)\hyperplus d_{i-1} \qquad \text{for} \qquad i=1,\dotsc,n-1.
 \]
 
 We begin with $c_n=d_{n-1}$. If $k<n-1$, then $d_{n-1}=c_n$ by \eqref{sign1} since $c_n\neq0$. If $k=n-1$, then $c_n=-c_l$ by the definition of $k$ and thus $d_{n-1}=-c_l=c_n$ by \eqref{sign3}. Thus $c_n=d_{n-1}$, as desired.
 
 We proceed with $c_0=-d_0$. If $l=0$, then $d_0=-c_0$ by \eqref{sign3}. If $l>0$, then $c_0=0$ and $d_0=0=c_0$ by \eqref{sign4}. Thus $c_0=-d_0$, as desired.
 
 We proceed with $c_i \in (-d_i)\hyperplus d_{i-1}$ for $1\leq i\leq n-1$. If $1\leq i<l$, then $c_i=0$ and $d_{i-1}=d_i=0$ by \eqref{sign4}. Thus $c_i \in (-d_i)\hyperplus d_{i-1}$, as desired.
 
 If $i=l$, then $c_l\neq 0$, $d_l=-c_l$ by \eqref{sign3} and $d_{l-1}=0$ by \eqref{sign4}. Thus $c_i \in (-d_i)\hyperplus d_{i-1}$, as desired.
 
 If $l< i\leq k$, then $d_{i-1}=d_i=-c_l\neq0$ by \eqref{sign3} and the definition of $l$. Thus $c_i \in (-d_i)\hyperplus d_{i-1}$, as desired.
 
 If $i=k+1$, then $c_{k+1}=-c_l\neq0$ by the definitions of $k$ and $l$. Thus $d_{k}=-c_l\neq 0$ by \eqref{sign3}, and $c_i \in (-d_i)\hyperplus d_{i-1}$, as desired.
 
 If $k+1< i \leq n-1$ and $c_{i}\neq0$, then $d_{i-1}=c_{i}\neq 0$ by \eqref{sign1}. If $k+1< i \leq n-1$ and $c_{i}=0$, then $d_{i-1}=d_{i}$ by \eqref{sign2}. Thus in both cases $c_i \in (-d_i)\hyperplus d_{i-1}$, as desired. This concludes the proof of the theorem.
\end{proof}


\begin{small}
 \bibliographystyle{plain}
 \bibliography{hyperfield}
\end{small}

\end{document}